\def\bC {\mathbf{C}}
\def\bE {\mathbf{E}}
\def\bN {\mathbf{N}}
\def\bR {\mathbf{R}}
\def\bT {\mathbf{T}}
\def\fH {\mathfrak{H}}
\def\fS {\mathfrak{S}}
\def\cA {\mathcal{A}}
\def\cD {\mathcal{D}}
\def\cF {\mathcal{F}}
\def\cH {\mathcal{H}}
\def\cL {\mathcal{L}}
\def\cS {\mathcal{S}}
\def\cU {\mathcal{U}}
\def\cV {\mathcal{V}}
\def\a {{\alpha}}
\def\b {{\beta}}
\def\g {{\gamma}}
\def\de {{\delta}}
\def\th {{\theta}}
\def\ka {{\kappa}}
\def\l {{\lambda}}
\def\L {{\Lambda}}
\def\si {{\sigma}}
\def\om {{\omega}}
\def\d {{\partial}}
\def\grad {{\nabla}}
\def\Dlt {{\Delta}}
\def\rstr {{\big |}}
\def\indc {{\bf 1}}
\def\wtilde {\widetilde }
\def\la {\langle}
\def\ra {\rangle}
\def \La {\bigg\langle}
\def \Ra {\bigg\rangle}
\newcommand{\Tr}{\operatorname{trace}}
\newcommand{\Op}{\operatorname{OP}}
\def\hb {{\hbar}}
\newcommand{\ba}{\begin{aligned}}
\newcommand{\ea}{\end{aligned}}
\newcommand{\be}{\begin{equation}}
\newcommand{\ee}{\end{equation}}
\newcommand{\lb}{\label}
\newtheorem{Thm}{Theorem}[section]
\newtheorem{Prop}[Thm]{Proposition}
\newtheorem{Lem}[Thm]{Lemma}
\newtheorem{Def}[Thm]{Definition}
\begin{document}

\title[Random Batch for Quantum Dynamics]{The Random Batch Method\\ for $N$-Body Quantum Dynamics}

\author[F. Golse]{Fran\c cois Golse}
\address[F.G.]{\'Ecole polytechnique, CMLS, 91128 Palaiseau Cedex, France}
\email{francois.golse@polytechnique.edu}

\author[S. Jin]{Shi Jin}
\address[S.J.]{School of Mathematical Sciences, Institute of Natural Sciences, MOE-LSC,  Shanghai Jiao Tong University, Shanghai 200240, China}
\email{shijin-m@sjtu.edu.cn}

\author[T. Paul]{Thierry Paul}
\address[T.P.]{Sorbonne Universit\'e, CNRS, Universit\'e de Paris, INRIA, Laboratoire Jacques-Louis Lions, 75005 Paris, France}
\email{thierry.paul@polytechnique.edu}

\begin{abstract}
This paper discusses a numerical method for computing the evolution of large interacting system of quantum particles. The idea of the random batch method is to replace the total interaction of each particle with the $N-1$ 
other particles by the interaction with $p<N$ particles chosen at random at each time step, multiplied by $(N-1)/p$. This reduces the computational cost of computing the interaction partial per time step from
$O(N^2)$ to $O(N)$. For simplicity, we consider only in this work the case $p=1$ --- in other words, we assume that $N$ is 
even, and that at each time step, the $N$ particles are organized in $N/2$ pairs, with a random reshuffling of the pairs at the beginning of each time step. We obtain a convergence estimate for the Wigner transform of the 
single-particle reduced density matrix of the particle system at time $t$ that is uniform in $N>1$ and independent of the Planck constant $\hbar$.
\end{abstract}

\keywords{Time-dependent Schr\"odinger equations, Random batch method, Mean-field limit, Wasserstein distance}

\subjclass{82C10, 82C22 (65M75)}

\maketitle


\section{Introduction}\lb{S-Intro}


Consider the quantum Hamiltonian for $N$ identical particles at the positions $x_1,\ldots,x_N\in\bR^d$:
\be\lb{NBodyHam}
\cH_N:=\sum_{m=1}^N-\tfrac12\hbar^2\Dlt_{x_m}+\tfrac1{N-1}\sum_{1\le l<n\le N}V(x_l-x_n)\,.
\ee
The $N$- particles in this system interact via a binary (real-valued ) potential $V$ assumed to be even, bounded and sufficiently regular (at least of class $C^{1,1}$ on $\bR^d$). The coupling constant $\tfrac1{N-1}$ is 
chosen in order to balance the summations in the kinetic energy (involving $N$ terms) and in the potential energy (involving $\tfrac12N(N-1)$ terms). 

We seek to compute the solution $\Psi\equiv\Psi(t,x_1,\ldots,x_N)\in\bC$ of the Schr\"odinger equation
\be\lb{NBodySchro}
i\hb\d_t\Psi(t,x_1,\ldots,x_N)=\cH_N\Psi(t,x_1,\ldots,x_N)\,,\quad\Psi\rstr_{t=0}=\Psi^{in}
\ee
where $t\ge 0$ is the time while $x_m\in\bR^d$ is the position of the $m$th particle. When solving  \eqref{NBodySchro}, the computation
is exceedingly expensive due to the 
smallness of $\hb$ which demands small time steps $\Dlt t$ and small mesh sizes of order $\hb$ for the convergence of the numerical scheme, due to the oscillation in the wave function $\Psi$ with frequency of order $1/\hb$  
(see \cite{BaoJinMarko,JMS}). On top of this, any numerical scheme for \eqref{NBodySchro} requires computing, at each time step, the sum of the interaction potential for each particle pair in the $N$-particle system, i.e. the 
sum of $\tfrac12N(N-1)$ terms. For large values of $N$, the cost of this computation, which is of order $O(N^2)$,  may become significant at each time step. The purpose of the Random Batch Method (RBM) described below 
is precisely to reduce significantly the computational cost of computing the interacting potential from $O(N^2)$ to $O(N)$.

Throughout this paper, we assume for simplicity that $N\ge 2$ is an even integer. Let $\si_1,\si_2,\ldots,\si_j,\ldots$ be a random sequence of mutually independent permutations distributed uniformly in $\fS_N$. Each permutation 
$\si\in\fS_N$ defines a partition of $\{1,\ldots,N\}$ into $N/2$ batches of two indices (pairs) as follows:
$$
\{1,\ldots,N\}=\coprod_{k=1}^{N/2}\{\si(2k-1),\si(2k)\}\,.
$$
Pick a time step $\Dlt>0$, set
$$
\bT_t(l,n):=\left\{\ba{}&1\quad&&\text{ if }\{l,n\}=\left\{\si_{[\frac{t}{\Dlt t}]+1}(2k\!-\!1),\si_{[\frac{t}{\Dlt t}]+1}(2k)\right\}\text{ for some }k=1,\ldots,\tfrac{N}2\,,\\ &0&&\text{ otherwise,}\ea\right.
$$
and consider the time-dependent Hamiltonian
\be\lb{RBHam}
\cH_N(t):=\sum_{m=1}^N-\tfrac12\hbar^2\Dlt_{x_m}+\sum_{1\le l<n\le N}\bT_t(l,n)V(x_l-x_n)\,.
\ee
In other words, at each time step, the particle labels $m=1,\ldots,N$ are reshuffled randomly, then grouped pairwisely,  and the potential applied to the $m$th particle by the system of $N-1$ other particles is replaced with the
interaction potential of that particle with the other --- {\it only one in this case } --- particle in the same group (batch). 

The motivation of the RBM is that the computation of the solution $\wtilde\Psi\in\bC$ of the time-dependent, random batch Schr\"odinger equation
\be\lb{RBSchro}
i\hb\d_t\wtilde\Psi(t,x_1,\ldots,x_N)=\cH_N(t)\wtilde\Psi(t,x_1,\ldots,x_N)\,,\quad\wtilde\Psi\rstr_{t=0}=\wtilde\Psi^{in}
\ee
is much less costly than computing the solution $\Psi_N$ of the $N$-body Schr\"odinger \eqref{NBodySchro} for large values of $N$. Clearly,  for each time step the cost of computing the interaction potential is reduced from 
$O(N^2)$ to $O(N)$. We remark that  the computational cost of reshuffling the $N$ labels is $O(N)$ by Durstenfeld's algorithm \cite{Durstenfeld}. Of course, one needs to prove that (\ref{RBSchro}) is a ``good approximation 
of $\Psi_N$''  for a sufficiently small time-step $\Dlt t$. 

Our goal in the present paper is to show that the RBM converges in some sense as $\Dlt t\to 0$, with an error estimate that is

(a) independent of $N$, and

(b) uniform in $\hb\in(0,1)$.

Obviously, one wishes to use the RBM for finite, albeit possibly large, values of $N$. It is therefore an obvious advantage to have an error estimate for the RBM that is {\it independent of $N$}, instead of an asymptotic rate of
convergence that would be valid only in the limit as $N\to+\infty$. This explains the need for condition (a). Moreover, the RBM is known to converge in the case of classical dynamics (see \cite{SJinLLiJGLiu}). It is therefore
natural to seek an error estimate for the quantum RBM method which does not deteriorate in the semiclassical regime, and this accounts for condition (b).

Our main results on this problem are gathered in the next section.

There are many variants of the RBM presented above. For instance, one could divide the $N$ particles in batches of $p$ (instead of only $2$, but with $p<<N)$ particles to enhance the accuracy, or reduce the variance of the 
method (assuming of course that $N$ is a multiple of $p$ for simplicity). Likewise, one could replace the PDE \eqref{RBSchro} with some numerical approximation thereof --- for instance one could approximate the solution 
of \eqref{RBSchro} by alternating direction method, where, at each time-step, one replaces the resolution of \eqref{RBSchro} by that of $N/2$ Schr\"odinger $2$-body equations for each particle pair belonging to the same batch.

The RBM for the classical dynamics of large particle systems has been proposed and analyzed in \cite{SJinLLiJGLiu}. However, obtaining error bounds which satisfy the conditions (a)-(b) listed above on random batch algorithms 
for quantum particle systems requires completely new ideas, especially on the problem of metrizing the state space as the number $N$ of interacting particles tends to infinity. For that reason, the present paper discusses only the 
simplest possible formulation of the RBM, specifically the approximation of the solution of \eqref{NBodySchro} by that of \eqref{RBSchro} with batches of only $2$ particles, in order to focus our attention on the essential features 
of this problem.

The origins of the random batch method introduced in \cite{SJinLLiJGLiu} for classical interacting particle systems  can be found in stochastic programming (see for instance the discussion of the stochastic gradient method in 
\cite{Nemirovski}), and more specifically in the applications of that method in the context of machine learning (see \cite{BachMoulines,YingYuanVlaskiSayed} and the references therein). An detailed presentation of stochastic 
approximation methods can be found in \cite{Kushner}, while \cite{Benaim} provides a nice introduction to the dynamical aspects of these methods. In \cite{SJinLLiJGLiu}, and also in the problem under study in this paper,  an
error estimate of the RBM is established for unsteady, time-dependent problems, while in stochastic optimization methods such as the stochastic Gradient descent methods, one uses pseudo-time and the goal is to prove
convergence toward the steady state. 


\section{Mathematical Setting and Main Result}


We have introduced the $N$-body quantum dynamics and its random batch approximation via the Schr\"odinger equations \eqref{NBodySchro} and \eqref{RBSchro}. However, it will be more convenient to couch the analysis
leading to our error estimates in terms of the corresponding von Neumann equations, which we recall below.

Henceforth we denote $\fH:=L^2(\bR^d;\bC)$ and $\fH_N=\fH^{\otimes N}\simeq L^2((\bR^d)^N;\bC)$ for each $N\ge 2$. The algebra of bounded operators on $\fH$ is denoted by $\cL(\fH)$, while $\cL^1(\fH)\subset\cL(\fH)$
and $\cL^2(\fH)$ are respectively the two-sided ideals of trace-class and Hilbert-Schmidt operators on $\fH$. The operator norm of $A\in\cL(\fH)$ is denoted $\|A\|$. A density operator on $\fH$ is a trace-class operator $R$ on 
$\fH$ such that
$$
R=R^*\ge 0\quad\text{ and }\quad\Tr_\fH(R)=1\,.
$$
An example\footnote{Throughout this paper, we use Dirac's bra-ket notation. A square integrable function $\psi\equiv\psi(x)\in\bC$ viewed as a vector in $\fH$ is denoted $|\psi\ra$, while the notation $\la\psi|$ designates
the linear functional
$$
\fH\ni\phi\mapsto\int_{\bR^d}\overline{\psi(x)}\phi(x)dx=:\la\psi|\phi\ra\in\bC\,.
$$}
of density operator on $\fH$ is the $\fH$-orthogonal projection on $\bC\psi$ for $\psi\in\fH$ satisfying $\|\psi\|_\fH=1$, henceforth denoted $|\psi\ra\la\psi|$.

The $N$-body von Neumann equation is the following differential equation with unknown $t\mapsto R(t)$, an operator-valued function of $t$:
\be\lb{NBodyvN}
i\hb\d_tR(t)=\cH_NR(t)-R(t)\cH_N=:[\cH_N,R(t)]\,,\quad R(0)=R^{in}\,.
\ee
Since $V\in C(\bR^d)$ is bounded real-valued, the $N$-body quantum Hamiltonian $\cH_N$ has a self-adjoint extension to $\fH_N$, so that the solution of \eqref{NBodySchro} is $\Psi(t,\cdot)=e^{-it\cH_N}\Psi^{in}$, while the 
solution of \eqref{NBodyvN} is given by 
\be\lb{R=}
R(t)=e^{-it\cH_N/\hb}R^{in}e^{it\cH_N/\hb}\,.
\ee
In particular, if $R^{in}\in\cD(\fH_N)$, then $R(t)\in\cD(\fH_N)$ for each $t\ge 0$. 

Likewise, if $R^{in}=|\Psi^{in}\ra\la\Psi^{in}|$, then $R(t)=|\Psi(t)\ra\la\Psi(t)|$ for each $t\ge 0$. Conversely, if $R(t)$ is a rank-one density operator, its range is of the form $\bC\Psi(t)$ with $\|\Psi(t)\|_{\fH_N}=1$, and this defines 
a unique $\Psi(t)$ so that $R(t)=|\Psi(t)\ra\la\Psi(t)|$ up to multiplication by a complex number of modulus one. In other words, $R(t)$ is in one-to-one correspondence with the quantum state associated to $\Psi(t)$, that is to say, 
in accordance with the Born interpretation, with the complex line in $\fH_N$ spanned by $\Psi(t)$. This explains the connection between \eqref{NBodySchro} and \eqref{NBodyvN}.

Likewise, the random batch von Neumann equation is the differential equation with unknown $t\mapsto\wtilde{R}(t)$, an operator-valued function of $t$:
\be\lb{RBvN}
i\hb\d_t\wtilde R(t)=[\cH_N(t),\wtilde R(t)]\,,\quad \quad\wtilde R(0)=R^{in}\,.
\ee
The formula giving $\wtilde R(t)$ is
$$
\wtilde R(t)=U(t,0)\wtilde R(0)U(0,t)
$$
where, for each $0\le s\le t$,
$$
\ba
{}&U(s,t):=e^{-\frac{i(s-[s/\Dlt t]\Dlt t)}\hb\cH_N([\frac{s}{\Dlt t}]\Dlt t)}\prod_{j=[s/\Dlt t]}^{[t/\Dlt t]-1}e^{\frac{i\Dlt t}\hb\cH_N(j\Dlt t)}e^{\frac{i(t-[t/\Dlt t]\Dlt t)}\hb\cH_N([\frac{t}{\Dlt t}]\Dlt t)}\,,
\\
&U(t,s):=U(s,t)^*\,.
\ea
$$
Henceforth we denote for simplicity
\be\lb{DefcU}
\cU(t,s)A:=U(t,s)AU(s,t)
\ee
for each $A\in\cL(\fH)$; hence
\be\lb{wtR=}
\wtilde R(t)=\cU(t,0)R^{in}\,,\qquad t\ge 0\,.
\ee

\smallskip
Since our purpose is to find an error estimate for the RBM that is independent of the particle number $N$, we first need to define in terms of $R(t)$ and $\wtilde R(t)$ {\it quantities of interest} to be compared that are {\it independent
of $N$}. For instance one cannot hope to use the trace-norm of $\wtilde R(t)-R(t)$ since both $\wtilde R(t)$ and $R(t)$, and the trace-norm itself for elements of $\cL^1(\fH_N)$ significantly depend on $N$. (There are other 
reasons for not using the trace-norm in this context, which will be explained later.) A common practice when considering large systems of identical particles is to study the reduced density operators. Assume that $R^{in}$ has
an integral kernel $r^{in}\equiv r^{in}(x_1,\ldots,x_N;y_1,\ldots,y_N)$ satisfying the symmetry
\be\lb{RinSym}
r^{in}(x_1,\ldots,x_N;y_1,\ldots,y_N)=r^{in}(x_{\si(1)},\ldots,x_{\si(N)};y_{\si(1)},\ldots,y_{\si(N)})
\ee
for a.e. $(x_1,\ldots,x_N;y_1,\ldots,y_N)\in\bR^{2dN}$ and each permutation $\si\in\fS_N$. Then, for each $t\ge 0$, the $N$-body density operator $R(t)$ solution of \eqref{NBodyvN} satisfies the same symmetry, i.e. it has an 
integral kernel of the form $r(t;x_1,\ldots,x_N;y_1,\ldots,y_N)$ such that
\be\lb{RtSym}
r(t;x_1,\ldots,x_N;y_1,\ldots,y_N)=r(t;x_{\si(1)},\ldots,x_{\si(N)};y_{\si(1)},\ldots,y_{\si(N)})
\ee
for a.e. $(x_1,\ldots,x_N;y_1,\ldots,y_N)\in\bR^{2dN}$, all $t\ge 0$ and each permutation $\si\in\fS_N$. The $1$-particle reduced density operator of $R(t)\in\cD(\fH_N)$ is $R_\indc(t)\in\cD(\fH)$ defined by the integral kernel
\be\lb{R1t}
r_\indc(t,x,y):=\int_{(\bR^d)^{N-1}}r(t;x,z_2,\ldots,z_N;y,z_2,\ldots,z_N)dz_2\ldots dz_N\,.
\ee
(This operation is legitimate for a trace-class operator $R$ on $\fH_N$: indeed, $R$ has an integral kernel $r(x_1,\ldots,x_N;y_1,\ldots,y_N)$ such that
$$
(z_1,\ldots,z_N)\mapsto r(x_1+z_1,\ldots,x_N+z_N;x_1,\ldots,x_N)
$$
belongs to $C(\bR^{dN}_{z_1,\ldots,z_N};L^1(\bR^{dN}_{x_1,\ldots,x_N}))$ according to Footnote 1 on p. 61 in \cite{FGTPaulARMA2017}.) 

Even if $R^{in}$ satisfies the symmetry \eqref{RinSym}, in general $\wtilde R(t)$ does not satisfy the symmetry analogous to \eqref{RtSym} for $t>0$ (with $r$ replaced with $\wtilde r$, an integral kernel for $\wtilde R(t)$) 
because the random batch potential
$$
\sum_{1\le l<n\le N}\bT_t(l,n)V(x_l-x_n)
$$
is not invariant under permutations of the particle labels, at variance with the $N$-body potential
$$
\frac1{N-1}\sum_{1\le l<n\le N}V(x_l-x_n)\,.
$$
For that reason, the $1$-particle reduced density operator of $\wtilde R(t)$ is $\wtilde R_\indc(t)\in\cD(\fH)$ defined for all $t>0$ by the integral kernel
\be\lb{wtR1t}
\wtilde r_\indc(t,x,y):=\frac1N\sum_{j=1}^N\int_{(\bR^d)^{N-1}}\wtilde r(t;Z_{j,N}[x],Z_{j,N}[y])d\hat Z_{j,N}\,,
\ee
with the notation
$$
Z_{j,N}[x]:=z_1,\ldots,z_{j-1},x,z_{j+1}\ldots,z_N\,,\quad d\hat Z_{j,N}=dz_1\ldots dz_{j-1}dz_{j+1}\ldots dz_N\,.
$$
(Obviously \eqref{wtR1t} holds with $r_\indc$ and $r$ in the place of $\wtilde r_\indc$ and $\wtilde r$ respectively because of the symmetry \eqref{RtSym}.)

\smallskip
Our main result on the convergence of the RBM for the $N$-body von Neumann equation \eqref{NBodyvN} (or for the $N$-body Schr\"odinger equation \eqref{NBodySchro}) is stated in terms of the Wigner functions of the 
density operators $R(t)$ and $\wtilde R(t)$. We first recall the definition of the Wigner function of an operator $S\in\cL^2(\fH)$: let $s\equiv s(x,y)$ be an integral kernel for $S$. Then $s\in L^2(\bR^d\times\bR^d)$ and the 
Wigner function of $S$ is the element of $L^2(\bR^d\times\bR^d)$ defined by the formula
$$
W_\hb[S](x,\cdot):=\tfrac1{(2\pi)^d}\cF\big(y\mapsto s(x+\tfrac12\hb y,x-\tfrac12\hb y)\big)\quad\text{ for a.e. }x\in\bR^d\,,
$$
where $\cF$ designates the Fourier transform on $L^2(\bR^d)$. If the argument of $\cF$ is integrable in $y$, then
$$
W_\hb[S](x,\xi):=\tfrac1{(2\pi)^d}\int_{\bR^d}s(x+\tfrac12\hb y,x-\tfrac12\hb y)e^{-i\xi\cdot y}dy\,.
$$
When $S=|\psi\ra\la\psi$ with $\psi\in\fH$, the Wigner function of $S$ is often denoted $W_\hb[\psi]$. The reader is referred to \cite{LionsPaul} for more details on the Wigner function.

For each integer $M\ge 1$, we also introduce the dual norm
$$
|||f|||_{-M}\!:=\!\sup\left\{\left|\iint_{\bR^d\times\bR^d}f(x,\xi)\overline{a(x,\xi)}dxd\xi\right|\quad\left|\ba{}&\,\,\,\,\,a\in C_c(\bR^d\times\bR^d)\,,\,\,\text{ and }
\\
&\max_{|\a|,|\b|\le M\atop |\a|+|\b|>0}\|\d_x^\a\d_\xi^\b a\|_{L^\infty(\bR^d\times\bR^d)}\!\le\! 1\ea\right.\right\}\,.
$$

\begin{Thm}\lb{T-Main}
Assume that $N\ge 2$ and that $V\in C(\bR^d)$ is a real-valued function such that
$$
V(z)=V(-z)\text{ for all }z\in\bR^d\,,\quad\lim_{|z|\to+\infty}V(z)=0\,,\quad\text{ and }\int_{\bR^d}(1+|\om|^2)|\hat V(\om)|d\om<\infty\,.
$$
Let $R^{in}\in\cD(\fH_N)$, and let $R(t)$ and $\wtilde R(t)$ be defined respectively by \eqref{R=} and \eqref{wtR=}. Let $R_\indc(t)$ and $\wtilde R_\indc(t)$ be the single-particle reduced density operators defined in terms
of $R(t)$ and $\wtilde R(t)$ by \eqref{R1t}.

Then there exists a constant $\g_d>0$ depending only on the dimension $d$ of the configuration space such that, for each $t>0$, one has
\be\lb{ErrWig}
\ba
|||W_\hb[\bE\wtilde R_\indc(t)]-W_\hb[R_\indc(t)]|||_{-[d/2]-3}
\\
\le 2\g_d\Dlt te^{6t\max(1,\sqrt{d}L(V))}\L(V)(2+3t\L(V)\max(1,\Dlt t)+4\sqrt{d}L(V)t\Dlt t)&\,.
\ea
\ee
where $\bE$ is the mathematical expectation and $\Dlt t$ the reshuffling time-step in the definition of the random batch Hamiltonian \eqref{RBHam}, while
$$
L(V):=\tfrac1{(2\pi)^d}\int_{\bR^d}|\om|^2|\hat V(\om)|d\om\,,\qquad\L(V):=\tfrac1{(2\pi)^d}\int_{\bR^d}\sum_{\mu=1}^d|\om^\mu||\hat V(\om)|d\om\,,
$$
where $\omega^\nu$ is the $\nu$-th component of $\omega$.
\end{Thm}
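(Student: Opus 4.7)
\emph{Strategy: dualize and reduce to Heisenberg observables.} My plan is to estimate the $|||\cdot|||_{-[d/2]-3}$ norm by testing against admissible symbols. For $a\in C_c(\bR^d\times\bR^d)$ with $\max_{0<|\a|+|\b|\le[d/2]+3}\|\d_x^\a\d_\xi^\b a\|_{L^\infty}\le 1$, the Weyl quantization identity
$$
\iint_{\bR^d\times\bR^d}W_\hb[S](x,\xi)\overline{a(x,\xi)}\dd x\dd\xi=\Tr_\fH(S\,\Op_\hb(a))
$$
combined with the Calder\'on--Vaillancourt theorem gives $\|\Op_\hb(a)\|\le\g_d$ uniformly in $\hb\in(0,1)$; this is the origin of both the constant $\g_d$ and the regularity threshold $[d/2]+3$. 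Using the permutation symmetry of $R^{in}$ and $R(t)$ one has $\Tr_\fH(R_\indc(t)\Op_\hb(a))=\tfrac1N\sum_{j=1}^N\Tr_{\fH_N}(R(t)\Op_j(a))$ with $\Op_j(a)$ acting as $\Op_\hb(a)$ on the $j$-th tensor factor, and \eqref{wtR1t} already puts $\wtilde R_\indc(t)$ in this averaged form; passing to the Heisenberg picture yields
$$
\Tr_\fH\!\bigl((\bE\wtilde R_\indc(t)-R_\indc(t))\Op_\hb(a)\bigr)=\frac1N\sum_{j=1}^N\Tr_{\fH_N}\!\Bigl(R^{in}\bigl(\bE\wtilde A_j(t)-A_j(t)\bigr)\Bigr),
$$
where $A_j(t):=e^{it\cH_N/\hb}\Op_j(a)e^{-it\cH_N/\hb}$ and $\wtilde A_j(t):=\cU(0,t)\Op_j(a)$.

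\emph{Step-by-step Duhamel and the mean-zero cancellation.} On each interval $I_k:=[k\Dlt t,(k+1)\Dlt t]$ the random-batch Hamiltonian is the constant operator $\cH_N+\Delta_k$ with $\Delta_k:=\cH_N(k\Dlt t)-\cH_N$. A direct count using $\bE\,\bT_t(l,n)=1/(N-1)$ gives $\bE\Delta_k=0$, and by construction $\Delta_k$ is independent of $\si_1,\ldots,\si_k$. Writing $\cC_B(X):=e^{i\Dlt t B/\hb}Xe^{-i\Dlt t B/\hb}$ for brevity and $E_k:=\bE\wtilde A_j(k\Dlt t)-A_j(k\Dlt t)$, I would derive the recursion
$$
E_{k+1}=\cC_{\cH_N}(E_k)+\bE\bigl[(\cC_{\cH_N+\Delta_k}-\cC_{\cH_N})\wtilde A_j(k\Dlt t)\bigr].
$$
A second-order Duhamel expansion of the inner bracket isolates a first-order term $\tfrac{i\Dlt t}{\hb}[\Delta_k,\wtilde A_j(k\Dlt t)]$ whose expectation vanishes by independence of $\Delta_k$ and $\wtilde A_j(k\Dlt t)$ --- this is the essential cancellation at the heart of the argument. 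The residual second-order remainder is $O(\Dlt t^2)$ per step in the appropriate norm, so summing over the $t/\Dlt t$ steps and controlling the unitary conjugations by a discrete Gr\"onwall produces both the linear-in-$\Dlt t$ scaling and the exponential prefactor $e^{6t\max(1,\sqrt d\,L(V))}$.

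\emph{Main obstacle: uniform-in-$\hb$ commutator estimates.} The essential new difficulty, and what distinguishes this argument from the classical one of \cite{SJinLLiJGLiu}, is that the above expansion generates commutators $[V(x_l-x_n),B]/\hb$ and iterated commutators $[V(x_l-x_n),[V(x_{l'}-x_{n'}),B]]/\hb^2$ that \emph{a priori} diverge as $\hb\to 0$. I would tame these via the Fourier inversion $V(z)=(2\pi)^{-d}\int\hat V(\om)e^{i\om\cdot z}\dd\om$: each commutator with a plane wave $e^{i\om\cdot(x_l-x_n)}$, composed with the Weyl quantization $\Op_j(a)$, translates at the symbol level into a finite-difference quotient in $\xi$ at scale $\hb|\om|$, bounded uniformly in $\hb$ by $|\om|\cdot\|\grad_\xi a\|_{L^\infty}$; integration against $\hat V$ then produces $\L(V)$ at first order, $L(V)$ at second order, and consumes up to two additional derivatives of $a$ --- this is the source of the threshold $[d/2]+3$. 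A final structural observation secures the $N$-independence: $[V(x_l-x_n),\Op_j(a)]$ vanishes whenever $j\notin\{l,n\}$, so even though $\Delta_k$ is a large sum, at most $O(1)$ terms survive in each commutator with $\Op_j(a)$, which combined with the $1/N$ from symmetrization yields estimates bounded independently of $N$.
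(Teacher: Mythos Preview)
Your overall architecture --- dualize via Weyl quantization, pass to Heisenberg observables, use a Duhamel expansion on each reshuffling interval, and exploit the independence $\bE\Dlt_k=0$ to kill the first-order term --- matches the paper's proof closely. The paper organizes things slightly differently (a single continuous Duhamel over $[0,t]$, with the per-interval subtraction made against the \emph{free} evolution $\cU_0(s-j\Dlt t)B_N(j\Dlt t)$ rather than the $\cH_N$-evolution you use), but this is cosmetic; your discrete recursion and the paper's integral identity \eqref{WeakIdent} are equivalent reorganizations of the same cancellation.

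There is, however, a real gap in your final paragraph on $N$-independence. Your sparsity observation ``$[V(x_l-x_n),\Op_j(a)]=0$ whenever $j\notin\{l,n\}$'' is correct only for the \emph{unvolved} observable $\Op_j(a)$. The second-order remainder in your recursion involves iterated commutators of $\Dlt_k$ with the \emph{evolved} operator $\wtilde A_j(k\Dlt t)=\cU(0,k\Dlt t)\Op_j(a)$, which is a genuine $N$-body operator and does \emph{not} commute with $V_{ln}$ for generic $l,n$. The $1/N$ from symmetrization does not rescue this: after one application of the Fourier trick you face $\sum_{m=1}^N\max_\mu\|[x_m^\mu,\wtilde A_j(k\Dlt t)]\|$, a sum of $N$ nonzero terms. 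What the paper does (sections \ref{SS-1stSC}--\ref{SS-2ndSC}) is define exactly these sums as $M_1(s)$ and the iterated analogue $M_2(s)$, derive closed differential inequalities for them along the random-batch flow, and close by Gronwall; the sparsity you invoke enters only as the \emph{terminal} condition $M_1(t),M_2(t)=O(1)$ in \eqref{M1t}, \eqref{M2t}, and the exponential $e^{6t\max(1,\sqrt d\,L(V))}$ is precisely the Gronwall factor from propagating $M_2$. Without this propagation step your remainder bound is $O(N)$ per step, not $O(1)$, and the argument fails. You should insert the $M_1,M_2$ machinery (or an equivalent propagation lemma for the summed commutator norms) between your commutator estimate and your conclusion.
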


\smallskip
Notice that the above result holds for the most general $N$-particle initial density operator $R^{in}$.

This error estimate satisfies both conditions (a) and (b). That it satisfies (a) is obvious, since $N$ appears on neither side of \eqref{ErrWig}. That it satisfies (b) is seen with the help of Theorem III.1 in \cite{LionsPaul}.
Indeed, for each $t\ge 0$, the operators $R(t)$ and $\wtilde R(t)$ define two bounded families of elements of $\cD(\fH)$ indexed by $N\ge 2$ and $\hb\in(0,1)$. Thus, $W_\hb[\bE\wtilde R_\indc(t)]$ and $W_\hb[R_\indc(t)]$
are relatively compact in the dual space $\cA'$ defined in Proposition III.1 of \cite{LionsPaul}, and the limit points of these families as $\hb\to 0$ are positive measures $\wtilde\mu(t)$ and $\mu(t)$ on the phase space 
$\bR^d\times\bR^d$. With relatively mild tightness assumptions on the behavior of $W_\hb[\bE\wtilde R_\indc(t)]$ and $W_\hb[R_\indc(t)]$ in the limit as $|x|+|\xi|\to\infty$, these ``Wigner measures'' $\wtilde\mu(t)$ and $\mu(t)$ 
encode the behavior of the reduced density operators $\wtilde R_\indc(t)$ and $R_\indc(t)$ in the semiclassical regime. After checking how the dual norm $|||\cdot|||_{-[d/2]-2}$ behaves on weakly-* converging sequences in 
$\cA'$, one can therefore hope that \eqref{ErrWig} implies an estimate for the difference of the $N$-body reduced Wigner measure $\mu(t)$, and the expected value of its random batch analogue $\wtilde\mu(t)$, since the right 
hand side of \eqref{ErrWig} does not involve $\hb$. 

The error estimate \eqref{ErrWig} can also be stated directly in terms of the density operators $R_\indc(t)$ and $\bE\wtilde R_\indc(t)$: see formula \eqref{LastIneq2} below. This formulation of the error bound involves a new 
metric $d_\hb$ on the set of density operators on $\fH$, introduced in Definition \ref{D-Defdhb}.


\section{Proof of Theorem \ref{T-Main}}\lb{S-Main}


The proof of Theorem \ref{T-Main} makes critical use of rather different key ingredients (such as the mutual independence of the reshuffling permutations $\si_j$, semiclassical estimates on the interaction terms, together
with a careful choice of test operators in the weak formulations of the $N$-body and random batch dynamics, and a quantitative version of the Calderon-Vaillancourt theorem), and as a result, is rather involved. We shall
therefore decompose our argument in seven steps. Each step addresses one of the key issues in the error estimate obtained in Theorem \ref{T-Main}.

\subsection{Using the weak formulations of \eqref{NBodySchro} and \eqref{RBSchro}}


For each $A\in\cL(\fH)$ and all $k=2,\ldots,N-1$, we set
$$
J_1A:=A\otimes I_{\fH}^{\otimes(N-1)}\,,\quad J_kA:=I_{\fH}^{\otimes(k-1)}\otimes A\otimes I_{\fH}^{\otimes(N-k)}\,,\quad J_NA=I_{\fH}^{\otimes(N-1)}\otimes A\,.
$$
With $A\in\cL(\fH)$ to be specified later, let
$$
B_N(s):=\cU(s,t)\frac1N\sum_{k=1}^NJ_kA\,.
$$

By the Duhamel formula,
$$
\ba
B_N(t)=e^{-it\cH_N/\hb}B_N(0)e^{+it\cH_N/\hb}
\\
+\frac1{i\hb}\int_0^te^{-i(t-s)\cH_N/\hb}\left[\sum_{1\le m<n\le N}\left(\bT_s(m,n)-\tfrac1{N-1}\right)V_{mn},B_N(s)\right]e^{+i(t-s)\cH_N/\hb}ds&\,,
\ea
$$
with the notation
$$
V_{mn}:=\text{ multiplication by }V(x_m-x_n)\,.
$$

Because of \eqref{R=} and \eqref{wtR=}, one has
$$
\ba
\Tr_{\fH_N}(\wtilde R(t)B_N(t))=&\Tr(U(t,0)\wtilde R(0)U(t,0)^*U(t,0)B_N(0)U(t,0)^*)
\\
=&\Tr_{\fH_N}(\wtilde R(0)B_N(0))
\ea
$$
by cyclicity of the trace, and because $U(t,s)$ is unitary on $\fH_N$. On the other hand
$$
\ba
\Tr_{\fH_N}(R(t)B_N(t))=&\Tr(e^{-it\cH_N/\hb}R^{in}e^{+it\cH_N/\hb}B_N(t))
\\
=&\Tr_{\fH_N}(R^{in}e^{+it\cH_N/\hb}B_N(t)e^{-it\cH_N/\hb})
\ea
$$
so that
$$
\ba
\Tr_{\fH_N}(R(t)B_N(t))-\Tr(R^{in}B_N(0))
\\
=\frac1{i\hb}\int_0^t\Tr\left(R^{in}e^{\frac{is}{\hb}\cH_N}\left[\sum_{1\le m<n\le N}\left(\bT_s(m,n)-\tfrac1{N-1}\right)V_{mn},B_N(s)\right]e^{-\frac{is}{\hb}\cH_N}\right)ds
\\
=\frac1{i\hb}\int_0^t\Tr\left(R(s)\left[\sum_{1\le m<n\le N}\left(\bT_s(m,n)-\tfrac1{N-1}\right)V_{mn},B_N(s)\right]\right)ds
\ea
$$
again by cyclicity of the trace. Therefore
$$
\ba
\Tr_{\fH_N}((\wtilde R(t)-R(t))B_N(t))
\\
=-\frac1{i\hb}\int_0^t\Tr\left(R(s)\left[\sum_{1\le m<n\le N}\left(\bT_s(m,n)-\tfrac1{N-1}\right)V_{mn},B_N(s)\right]\right)ds&\,.
\ea
$$

With our choice of $B_N(t)$, this last identity is recast as
$$
\ba
\Tr_{\fH_N}\left((\wtilde R(t)-R(t))\frac1N\sum_{k=1}^NJ_kA\right)
\\
=\frac{i}{\hb}\sum_{j=1}^{[\frac{t}{\Dlt t}]}\int_{(j-1)\Dlt t}^{j\Dlt t}\Tr\left(\left[\sum_{1\le m<n\le N}\left(\bT_s(m,n)-\tfrac1{N-1}\right)V_{mn},B_N(s)\right]R(s)\right)ds
\\
+\frac{i}{\hb}\int_{[\frac{t}{\Dlt t}]\Dlt t}^t\Tr\left(\left[\sum_{1\le m<n\le N}\left(\bT_s(m,n)-\tfrac1{N-1}\right)V_{mn},B_N(s)\right]R(s)\right)ds&\,.
\ea
$$
Hence, taking the expectation over random reshufflings, and using the definitions \eqref{R1t} and \eqref{wtR1t} of reduced density operators, we arrive at the identity
$$
\ba
\Tr_{\fH}\left((\bE\wtilde R_\indc(t)-R_\indc(t))A\right)=\Tr_{\fH_N}\left((\bE\wtilde R(t)-R(t))\frac1N\sum_{k=1}^NJ_kA\right)
\\
=\frac{i}{\hb}\sum_{j=1}^{[\frac{t}{\Dlt t}]}\int_{(j-1)\Dlt t}^{j\Dlt t}\Tr\left(R(s)\bE\left[\sum_{1\le m<n\le N}\left(\bT_s(m,n)-\tfrac1{N-1}\right)V_{mn},B_N(s)\right]\right)ds
\\
+\frac{i}{\hb}\int_{[\frac{t}{\Dlt t}]\Dlt t}^t\Tr\left(R(s)\bE\left[\sum_{1\le m<n\le N}\left(\bT_s(m,n)-\tfrac1{N-1}\right)V_{mn},B_N(s)\right]\right)ds&\,.
\ea
$$

\subsection{Using the independence of $\si_1,\si_2,\ldots$}


For all $S\in\cL(\fH_N)$, denote
$$
\cU_0(t)S:=\left(e^{it\hb\Dlt/2}\right)^{\otimes N}S\left(e^{-it\hb\Dlt/2}\right)^{\otimes N}\,.
$$
Observe that
$$
\ba
\Tr_{\fH_N}\left(R(s)\bE\left[\sum_{1\le m<n\le N}\left(\bT_s(m,n)-\tfrac1{N-1}\right)V_{mn},B_N(s)\right]\right)
\\
=\Tr_{\fH_N}\left(R(s)\bE\left[\sum_{1\le m<n\le N}\left(\bT_s(m,n)-\tfrac1{N-1}\right)V_{mn},\Dlt B_N(s,j))\right]\right)&\,,
\ea
$$
with 
$$
\Dlt B_N(s,j):=B_N(s)-\cU_0(s-j\Dlt t)B_N(j\Dlt t)\,,
$$
since
$$
\ba
\Tr_{\fH_N}\left(R(s)\bE\left[\sum_{1\le m<n\le N}\left(\bT_s(m,n)-\tfrac1{N-1}\right)V_{mn},\cU_0(s-j\Dlt t)B_N(j\Dlt t)\right]\right)
\\
=\Tr_{\fH_N}\left(R(s)\left[\sum_{1\le m<n\le N}\bE\left(\bT_s(m,n)-\tfrac1{N-1}\right)V_{mn},\cU_0(s-j\Dlt t)\bE B_N(j\Dlt t)\right]\right)&=0\,.
\ea
$$
The penultimate equality follows from the independence of the $\si_j$'s, since 
$$
B_N(j\Dlt t)=B_N(j\Dlt t+0)
$$
involves only $\si_{j+1},\ldots,\si_{[t/\Dlt t]+1}$, while $\cU_0(s-j\Dlt t)$ is deterministic and $\bT_s(m,n)$ only depends on $\si_j$. As for the last equality, it comes from the identity
$$
\bE\bT_s(m,n)=\frac1{N-1}\,,\quad\text{ for all }1\le m<n\le N\text{ and }s\ge 0\,.
$$
For this last identity, see the proof of Lemma 3.1 in \cite{SJinLLiJGLiu}, and especially the second formula after (3.13) on p. 8 in \cite{SJinLLiJGLiu}.

Therefore
\be\lb{WeakIdent}
\ba
\Tr_{\fH}\left((\bE\wtilde R_\indc(t)-R_\indc(t))A\right)=\Tr_{\fH_N}\left((\bE\wtilde R(t)-R(t))\frac1N\sum_{k=1}^NJ_kA\right)
\\
=\frac{i}{\hb}\sum_{j=1}^{[\frac{t}{\Dlt t}]}\int_{(j-1)\Dlt t}^{j\Dlt t}\Tr_{\fH_N}\Bigg(R(s)\bE\Bigg[\sum_{1\le m<n\le N}\left(\bT_s(m,n)-\tfrac1{N-1}\right)V_{mn},\Dlt B_N(s,j)\Bigg]\Bigg)ds
\\
+\frac{i}{\hb}\int_{[\frac{t}{\Dlt t}]\Dlt t}^t\Tr_{\fH_N}\left(R(s)\bE\left[\sum_{1\le m<n\le N}\left(\bT_s(m,n)-\tfrac1{N-1}\right)V_{mn},B_N(s)\right]\right)ds&\,.
\ea
\ee

\subsection{Semiclassical potential estimate}\lb{SS-SCPotEst}


The previous formula makes it obvious that our error analysis requires estimating commutators of various operators with the interaction potential. Besides, all these terms involve a $1/\hb$ prefactor. With a view
towards obtaining uniform as $\hb\to 0$ error estimates, one should avoid by all means using bounds of the type
$$
\left\|\frac1\hb[V_{mn},S]\right\|\le\frac2\hb\|V\|_{L^\infty(\bR^d)}\|S\|\,.
$$
We shall use instead the following lemma (see \cite{FGTPaulEmpir} on p. 1048).

\begin{Lem}\lb{L-[fT]}
Let $f\equiv f(x)$ be an element of $C^1_0(\bR^d;\bC)$ such that $\hat f$ and $\widehat\grad f$ belong to $L^1(\bR^d)$. Then, for each $T\in\cL(\fH)$, one has
$$
\|[f,T]\|\le\L(f)\max_{1\le\nu\le d}\|[x^\nu,T]\|\,,
$$
with
$$
\L(f):=\tfrac1{(2\pi)^d}\int_{\bR^d}\sum_{\mu=1}^d|\om^\mu||\hat f(\om)|d\om\,.
$$
\end{Lem}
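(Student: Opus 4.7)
The plan is to reduce the commutator with $f$ to commutators with plane waves $e^{i\omega\cdot x}$ via Fourier inversion, and then reduce the commutator with a plane wave to commutators with the coordinate multiplication operators $x^\nu$ via a Duhamel-type identity along the one-parameter unitary group $s\mapsto e^{is\omega\cdot x}$.

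First, under the hypotheses on $f$, Fourier inversion gives
$$
f(x)\;=\;\tfrac1{(2\pi)^d}\int_{\bR^d}\hat f(\om)\,e^{i\om\cdot x}\,d\om,
$$
and since multiplication by $f$ is bounded (one can check $f\in L^\infty$ via the absolute convergence of this integral), integration against the operator-valued function $\om\mapsto[e^{i\om\cdot x},T]$ yields
$$
[f,T]\;=\;\tfrac1{(2\pi)^d}\int_{\bR^d}\hat f(\om)\,[e^{i\om\cdot x},T]\,d\om,
$$
with an estimate $\|[f,T]\|\le\tfrac1{(2\pi)^d}\int|\hat f(\om)|\,\|[e^{i\om\cdot x},T]\|\,d\om$. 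So the whole game is to control $\|[e^{i\om\cdot x},T]\|$ by $\max_\nu\|[x^\nu,T]\|$ with a linear factor $\sum_\mu|\om^\mu|$.

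Next, since each $x^\mu$ is a multiplication operator, $[x^\mu,e^{is\om\cdot x}]=0$; in particular the unitary $V_s:=e^{is\om\cdot x}$ satisfies $V_s x^\mu V_{-s}=x^\mu$, so conjugation by $V_s$ is an isometry that commutes with $[x^\mu,\cdot]$. The plan is to write
$$
[e^{i\om\cdot x},T]\;=\;\big(V_1TV_{-1}-T\big)\,e^{i\om\cdot x}
\;=\;\left(\int_0^1\tfrac{d}{ds}\big(V_sTV_{-s}\big)\,ds\right)e^{i\om\cdot x}
\;=\;i\int_0^1[\om\cdot x,\,V_sTV_{-s}]\,ds\;e^{i\om\cdot x}.
$$
Expanding $\om\cdot x=\sum_\mu\om^\mu x^\mu$ and using $[x^\mu,V_sTV_{-s}]=V_s[x^\mu,T]V_{-s}$ together with unitarity of $V_s$, I get
$$
\|[\om\cdot x,V_sTV_{-s}]\|\;\le\;\sum_{\mu=1}^d|\om^\mu|\,\|[x^\mu,T]\|\;\le\;\Bigl(\sum_{\mu=1}^d|\om^\mu|\Bigr)\max_{1\le\nu\le d}\|[x^\nu,T]\|,
$$
hence $\|[e^{i\om\cdot x},T]\|\le\bigl(\sum_\mu|\om^\mu|\bigr)\max_\nu\|[x^\nu,T]\|$.

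Combining the two steps and using the definition of $\L(f)$ yields $\|[f,T]\|\le\L(f)\max_\nu\|[x^\nu,T]\|$; the finiteness of $\L(f)$ is exactly what the hypothesis $\widehat{\grad f}\in L^1$ (equivalent to $\om\mapsto|\om|\,|\hat f(\om)|\in L^1$) provides. There is no serious obstacle; the only subtle point is recognizing that the Duhamel identity forces the commutator with $x^\mu$ out of a conjugation that preserves norms, which gives a bound free of the factor $\|T\|/\hb$ one would obtain from the trivial estimate $\|[f,T]\|\le 2\|f\|_{L^\infty}\|T\|$. That gain is exactly what is needed downstream in Section \ref{SS-SCPotEst} to keep the estimates uniform in $\hb$.
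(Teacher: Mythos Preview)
Your proof is correct and follows essentially the same approach as the paper: Fourier inversion to reduce $[f,T]$ to $[E_\om,T]$ with $E_\om=e^{i\om\cdot x}$, then the Duhamel identity $[E_\om,T]E_\om^*=\int_0^1 E_{t\om}[i\om\cdot x,T]E_{t\om}^*\,dt$ to pull out the commutators $[x^\mu,T]$ with norm preserved by conjugation. The only cosmetic difference is that you make explicit the commutation $[x^\mu,V_s]=0$ to write $[x^\mu,V_sTV_{-s}]=V_s[x^\mu,T]V_{-s}$, whereas the paper leaves the commutator inside the conjugation directly.
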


\begin{proof}
Let $E_\om\in\cL(\fH)$ be the operator defined by $E_\om\psi(x)=e^{i\om\cdot x}\psi(x)$ for all $x\in\bR^d$. Then
$$
[f,T]=\tfrac1{(2\pi)^d}\int_{\bR^d}\hat f(\om)[E_\om,T]d\om
$$
and
$$
[E_\om,T]E_\om^*=\int_0^1\frac{d}{dt}(E_{t\om}TE_{t\om}^*)dt=\int_0^1E_{t\om}[i\om\cdot x,T]E_{t\om}^*dt
$$
so that
$$
\|[E_\om,T]\|\le\max_{1\le\nu\le d}\|[x^\nu,T]\|\sum_{\mu=1}^d|\om^\mu|\,.
$$
Hence
$$
\|[f,T]\|\le\max_{1\le\nu\le d}\|[x^\nu,T]\tfrac1{(2\pi)^d}\int_{\bR^d}|\hat f(\om)|\sum_{\mu=1}^d|\om^\mu|d\om\,,
$$
which implies the desired bound.
\end{proof}

\smallskip
We use this lemma to control the terms $[V_{mn},B_N(s)]$ and $[V_{mn},\Dlt B_N(s,j)]$ for each $m,n=1,\ldots,N$. First, one has
\be\lb{[VB]<}
\|[V_{mn},B_N(s)]\|\le\L(V)(\max_{1\le\mu\le d}\|[x_m^\mu,B_N(s)]\|+\max_{1\le\nu\le d}\|[x_n^\nu,B_N(s)]\|)\,,
\ee
and
\be\lb{[VDB]<}
\|[V_{mn},\Dlt B_N(s,j)]\|\le\L(V)(\max_{1\le\mu\le d}\|[x_m^\mu,\Dlt B_N(s,j)]\|+\max_{1\le\nu\le d}\|[x_n^\nu,\Dlt B_N(s,j)]\|)\,.
\ee
Hence
\be\lb{Last[]}
\ba
\left\|\sum_{1\le m<n\le N}\left[\left(\bT_s(m,n)-\tfrac1{N-1}\right)V_{mn},B_N(s)\right]\right\|\le 2\L(V)\sum_{m=1}^N\max_{1\le\mu\le d}\|[x_m^\mu,B_N(s)]\|&\,,
\ea
\ee
and
\be\lb{Gen[]}
\ba
\left\|\sum_{1\le m<n\le N}\left[\left(\bT_s(m,n)-\tfrac1{N-1}\right)V_{mn},\Dlt B_N(s,j)\right]\right\|
\\
\le 2\L(V)\sum_{m=1}^N\max_{1\le\mu\le d}\|[x_m^\mu,\Dlt B_N(s,j)]\|&\,.
\ea
\ee

For each $m=1,\ldots,N$, we henceforth denote by $\wtilde m(t)$ the unique index in $\{1,\ldots,N\}$ different from $m$ and in the same batch as $m$ at time $t$. In other words, $\tilde m(t)$ is defined by 
the  following two conditions:
\be\lb{Deftm}
\wtilde m(t)\not=m\quad\text{ and }\quad\bT_t(m,\wtilde m(t))=1\,.
\ee
By the Duhamel formula
\be\lb{FlaDeltaB}
\Dlt B_N(s,j)=\frac1{i\hb}\int_{j\Dlt t}^s\tfrac12\sum_{l=1}^N\cU_0(s-\tau)[V_{l,{\wtilde l}(s)},B(\tau)]d\tau\,,
\ee
so that, for each $m=1,\ldots,N$ and each $\mu=1,\ldots,d$, one has
$$
\|[x_m^\mu,\Dlt B_N(s,j)]\|\le\frac1{\hb}\int_s^{j\Dlt t}\tfrac12\sum_{l=1}^N\|[x_m^\mu,\cU_0(s-\tau)[V_{l,{\wtilde l}(s)},B(\tau)]]\|d\tau\,.
$$

An elementary computation shows that
$$
[x_m^\mu,\cU_0(\th)R]=\cU_0(\th)([x_m^\mu,R]+\th[-i\hb\d_{x_m^\mu},R])\,;
$$
hence
$$
\|[x_m^\mu,\cU_0(\th)R]\|\le\|[x_m^\mu,R]\|+|\th|\|[-i\hb\d_{x_m^\mu},R]\|\,.
$$

Therefore
$$
\ba
\|[x_m^\mu,\Dlt B_N(s,j)]\|
\\
\le\frac1{2\hb}\int_s^{j\Dlt t}\sum_{l=1}^N(\|[x_m^\mu,[V_{l,{\wtilde l}(s)},B(\tau)]]\|+(\tau-s)\|[-i\hb\d_{x_m^\mu},[V_{l,{\wtilde l}(s)},B(\tau)]]\|)d\tau
\\
\le\frac1{2\hb}\int_s^{j\Dlt t}\sum_{l=1}^N(\|[V_{l,{\wtilde l}(s)},[x_m^\mu,B(\tau)]]\|+(\tau-s)\|[V_{l,{\wtilde l}(s)},[-i\hb\d_{x_m^\mu},B(\tau)]]\|)d\tau
\\
+\frac1{\hb}\int_s^{j\Dlt t}(\tau-s)\|[-i\hb\d_\mu V_{m,{\wtilde m}(s)},B(\tau)]\|d\tau&\,.
\ea
$$
In other words,
\be\lb{[xmmuDltB]}
\ba
\sum_{m=1}^N\max_{1\le\mu\le d}\|[x_m^\mu,\Dlt B_N(s,j)]\|
\\
\le\frac{\L(V)}{\hb}\int_s^{j\Dlt t}\sum_{1\le l,m\le N}\max_{1\le\l,\mu\le d}\|[x_l^\l,[x_m^\mu,B(\tau)]]\|d\tau
\\
+\frac{\L(V)\Dlt t}{\hb}\int_s^{j\Dlt t}\sum_{1\le l,m\le N}\max_{1\le\l,\mu\le d}\|[x_l^\l,[-i\hb\d_{x_m^\mu},B(\tau)]]\|d\tau
\\
+2\sqrt{d}L(V)\Dlt t\int_s^{j\Dlt t}\sum_{m=1}^N\max_{1\le\mu\le d}\|[x_m^\mu,B(\tau)]\|d\tau&\,.
\ea
\ee

\subsection{First order semiclassical estimates}\lb{SS-1stSC}


In view of the inequality above, a key issue is therefore to bound operators of the form 
$$
[x_m^\mu,B(\tau)]\,,\qquad[-i\hb\d_{x_m^\mu},B(\tau)]\,,
$$
and
$$
[x_l^\l,[x_m^\mu,B(\tau)]]\,,\quad[x_l^\l,[-i\hb\d_{x_m^\mu},B(\tau)]]\,,\quad[-i\hb\d_{x_l^\l},[x_m^\mu,B(\tau)]]\,.
$$
This is done in the present section and the next. The bounds on the first two quantities above follow the proof of Lemma 4.1 in \cite{FGTPaulEmpir}, which is itself based on the earlier analysis in Appendix B of 
\cite{BeneJakPortaSaffSchlein}, or Appendix C of \cite{BenePortaSaffSchlein}.

We recall that
$$
i\hbar\d_sB_N(s)=[\cH_N(s),B_N(s)]\,,\qquad B_N(t)=\frac1N\sum_{m=1}^NJ_mA\,.
$$
Hence
$$
i\hbar\d_s[x_m^\mu,B_N(s)]=[\cH_N(s),[x_m^\mu,B_N(s)]]+[[x_m^\mu,\cH_N(s)],B_N(s)]\,,
$$
and 
$$
[x_m^\mu,\cH_N(s)]=[x_m^\mu,-\tfrac12\hbar^2\Dlt_{x_m}]=i\hb(-i\hb\d_{x_m^\mu})\,,
$$
so that
$$
i\hbar\d_s[x_m^\mu,B_N(s)]=[\cH_N(s),[x_m^\mu,B_N(s)]]+i\hb[-i\hb\d_{x_m^\mu},B_N(s)]\,.
$$
Thus
$$
\|[x_m^\mu,B_N(s)]\|\le\|[x_m^\mu,B_N(t)]\|+\int_s^t\|[-i\hb\d_{x_m^\mu},B_N(\tau)]\|d\tau\,.
$$
Likewise
$$
i\hbar\d_s[-i\hb\d_{x_m^\mu},B_N(s)]=[\cH_N(s),[-i\hb\d_{x_m^\mu},B_N(s)]]+[[-i\hb\d_{x_m^\mu},\cH_N(s)],B_N(s)]\,,
$$
and 
$$
[-i\hb\d_{x_m^\mu},\cH_N(s)]=\tfrac12\sum_{l=1}^N[-i\hb\d_{x_m^\mu},V(x_l-x_{\wtilde l(s)})]=-i\hb\d_\mu V(x_m-x_{\wtilde m(s)})\,,
$$
so that
$$
i\hbar\d_s[-i\hb\d_{x_m^\mu},B_N(s)]=[\cH_N(s),[-i\hb\d_{x_m^\mu},B_N(s)]]-i\hb[\d_\mu V(x_m-x_{\wtilde m(s)}),B_N(s)]\,,
$$
Therefore
$$
\ba
{}\|[-i\hb\d_{x_m^\mu},B_N(s)]\|\le&\|[-i\hb\d_{x_m^\mu},B_N(t)]\|
\\
&+\sqrt{d}L(V)\int_s^t(\max_{1\le\ka\le d}\|[x_m^\ka,B_N(\tau)]\|+\max_{1\le\ka\le d}\|[x_{\wtilde m(\tau)}^\ka,B_N(\tau)])d\tau
\ea
$$

Set
\be\lb{DefM1}
M_1(s):=\sum_{m=1}^N\max_{1\le\mu\le d}\|[x_m^\mu,B_N(s)]\|+\sum_{m=1}^N\max_{1\le\mu\le d}\|[-i\hb\d_{x_m^\mu},B_N(s)]\|\,.
\ee
Then, one has
$$
M_1(s)\le M_1(t)+\max(1,2\sqrt{d}L(V))\int_s^tM_1(\tau)
$$
so that, by Gronwall's lemma
\be\lb{M1s<}
M_1(s)\le M_1(t)e^{(t-s)\max(1,2\sqrt{d}L(V))}\,.
\ee

Finally, since 
$$
B_N(t)=\frac1N\sum_{k=1}^NJ_kA\,,
$$
one has
$$
[x_m^\mu,B_N(t)]=\frac1NJ_m[x^\mu,A]\,,\qquad[-i\hb\d_{x_m^\mu},B_N(t)]=\frac1NJ_m[-i\hb\d_\mu,A]\,.
$$
Hence
$$
\|[x_m^\mu,B_N(t)]\|\le\frac1N\|[x^\mu,A]\|\,,\qquad\|[-i\hb\d_{x_m^\mu},B_N(t)]\|\le\frac1N\|[-i\hb\d_\mu,A]\|\,,
$$
so that
\be\lb{M1t}
M_1(t)\le\max_{1\le\mu\le d}\|[x^\mu,A]\|+\max_{1\le\mu\le d}\|[-i\hb\d_\mu,A]\|
\ee
and therefore
$$
M_1(s)\le(\max_{1\le\mu\le d}\|[x^\mu,A]\|+\max_{1\le\mu\le d}\|[-i\hb\d_\mu,A]\|)e^{(t-s)\max(1,2\sqrt{d}L(V))}\,.
$$

\subsection{Second Order Semiclassical Estimates}\lb{SS-2ndSC}


One has
$$
\ba
i\hbar\d_s[x_n^\nu,[x_m^\mu,B_N(s)]]=&[\cH_N(s),[x_n^\nu,[x_m^\mu,B_N(s)]]]
\\
&+[[x_n^\nu,\cH_N(s)],[x_m^\mu,B_N(s)]]+i\hb[x_n^\nu,[-i\hb\d_{x_m^\mu},B_N(s)]]
\\
=&[\cH_N(s),[x_n^\nu,[x_m^\mu,B_N(s)]]]
\\
&+i\hb[-i\hb\d_{x_n^\nu},[x_m^\mu,B_N(s)]]+i\hb[x_n^\nu,[-i\hb\d_{x_m^\mu},B_N(s)]]\,.
\ea
$$
Notice that
$$
[-i\hb\d_{x_n^\nu},[x_m^\mu,B_N(s)]]=[x_m^\mu,[-i\hb\d_{x_n^\nu},B_N(s)]]\,.
$$

Likewise
$$
\ba
i\hbar\d_s[-i\hb\d_{x_n^\nu},[x_m^\mu,B_N(s)]]=&[\cH_N(s),[-i\hb\d_{x_n^\nu},[x_m^\mu,B_N(s)]]]
\\
&+[[-i\hb\d_{x_n^\nu},\cH_N(s)],[x_m^\mu,B_N(s)]]
\\
&+i\hb[-i\hb\d_{x_n^\nu},[-i\hb\d_{x_m^\mu},B_N(s)]]
\\
=&[\cH_N(s),[-i\hb\d_{x_n^\nu},[x_m^\mu,B_N(s)]]]
\\
&-i\hb[\d_\nu V(x_n-x_{\wtilde n(s)}),[x_m^\mu,B_N(s)]]
\\
&+i\hb[-i\hb\d_{x_n^\nu},[-i\hb\d_{x_m^\mu},B_N(s)]]\,,
\ea
$$
and
$$
\ba
i\hbar\d_s[-i\hb\d_{x_n^\nu},[-i\hb\d_{x_m^\mu},B_N(s)]]=&[\cH_N(s),[-i\hb\d_{x_n^\nu},[-i\hb\d_{x_m^\mu},B_N(s)]]]
\\
&+[[-i\hb\d_{x_n^\nu},\cH_N(s)],[-i\hb\d_{x_m^\mu},B_N(s)]]
\\
&+i\hb[-i\hb\d_{x_n^\nu},[-i\hb\d_{x_m^\mu},B_N(s)]]
\\
=&[\cH_N(s),[-i\hb\d_{x_n^\nu},[-i\hb\d_{x_n^\nu},B_N(s)]]]
\\
&-i\hb[\d_\nu V(x_n-x_{\wtilde n(s)}),[-i\hb\d_{x_m^\mu},B_N(s)]]
\\
&+i\hb[-i\hb\d_{x_n^\nu},[\d_\mu V(x_m-x_{\wtilde m(s)}),B_N(s)]]\,.
\ea
$$
Since
$$
\ba
{}[-i\hb\d_{x_n^\nu},[\d_\mu V(x_m-x_{\wtilde m(s)}),B_N(s)]]=[\d_\mu V(x_m-x_{\wtilde m(s)}),[-i\hb\d_{x_n^\nu},B_N(s)]]
\\
+[[-i\hb\d_{x_n^\nu},\d_\mu V(x_m-x_{\wtilde m(s)})],B_N(s)]
\ea
$$
and
$$
\ba
{}[-i\hb\d_{x_n^\nu},\d_\mu V(x_m-x_{\wtilde m(s)})]=&-i\hb\d_\nu\d_\mu V(x_m-x_{\wtilde m(s)})(\de_{n,m}-\de_{n,\wtilde m(s)})
\\
=&-i\hb\d_\nu\d_\mu V(x_n-x_{\wtilde n(s)})+i\hb\d_\nu\d_\mu V(x_{\wtilde n(s)}-x_n)=0\,.
\ea
$$
Indeed, $V$ is even, so that $\d_\nu\d_\mu V$ is also even. Hence
$$
\ba
i\hbar\d_s[-i\hb\d_{x_n^\nu},[-i\hb\d_{x_m^\mu},B_N(s)]]=&[\cH_N(s),[-i\hb\d_{x_n^\nu},[-i\hb\d_{x_n^\nu},B_N(s)]]]
\\
&-i\hb[\d_\nu V(x_n-x_{\wtilde n(s)}),[-i\hb\d_{x_m^\mu},B_N(s)]]
\\
&+i\hb[\d_\mu V(x_m-x_{\wtilde m(s)}),[-i\hb\d_{x_n^\nu},B_N(s)]]\,.
\ea
$$

Using the Duhamel formula, one finds that
$$
\ba
\|[x_n^\nu,[x_m^\mu,B_N(s)]]\|\le\|[x_n^\nu,[x_m^\mu,B_N(s)]]\|+\int_s^t\|[x_m^\mu,[-i\hb\d_{x_n^\nu},B_N(\tau)]]\|d\tau
\\
+\int_s^t\|[x_n^\nu,[-i\hb\d_{x_m^\mu},B_N(\tau)]]\|d\tau
\ea
$$
and
$$
\ba
\|[-i\hb\d_{x_n^\nu},[x_m^\mu,B_N(s)]]\|\le&\|[-i\hb\d_{x_n^\nu},[x_m^\mu,B_N(t)]]\|
\\
&+\sqrt{d}L(V)\int_s^t\max_{1\le\ka\le d}\|[x_n^\ka,[x_m^\mu,B_N(\tau)]]\|d\tau
\\
&+\sqrt{d}L(V)\int_s^t\max_{1\le\ka\le d}\|[x_{\wtilde n(\tau)}^\ka,[x_m^\mu,B_N(\tau)]]\|d\tau
\\
&+\int_s^t\|[-i\hb\d_{x_n^\nu},[-i\hb\d_{x_m^\mu},B_N(\tau)]]\|d\tau\,,
\ea
$$
while
$$
\ba
\|[-i\hb\d_{x_n^\nu},[-i\hb\d_{x_m^\mu},B_N(s)]]\|\le&\|[-i\hb\d_{x_n^\nu},[-i\hb\d_{x_m^\mu},B_N(t)]]\|
\\
&+\sqrt{d}L(V)\int_s^t\max_{1\le\ka\le d}\|[x_n^\ka,[-i\hb\d_{x_m^\mu},B_N(\tau)]]\|d\tau
\\
&+\sqrt{d}L(V)\int_s^t\max_{1\le\ka\le d}\|[x_{\wtilde n(\tau)}^\ka,[-i\hb\d_{x_m^\mu},B_N(\tau)]]\|d\tau
\\
&+\sqrt{d}L(V)\int_s^t\max_{1\le\ka\le d}\|[x_m^\ka,[-i\hb\d_{x_n^\nu},B_N(\tau)]]\|d\tau
\\
&+\sqrt{d}L(V)\int_s^t\max_{1\le\ka\le d}\|[x_{\wtilde m(\tau)}^\ka,[-i\hb\d_{x_n^\nu},B_N(\tau)]]\|d\tau\,.
\ea
$$

Set
\be\lb{DefM2}
\ba
M_2(s):=&\sum_{1\le m,n\le N}\max_{1\le\mu,\nu\le d}\|[x_n^\nu,[x_m^\mu,B_N(s)]]\|
\\
&+\sum_{1\le m,n\le N}\max_{1\le\mu,\nu\le d}\|[-i\hb\d_{x_n^\nu},[x_m^\mu,B_N(s)]]\|
\\
&+\sum_{1\le m,n\le N}\max_{1\le\mu,\nu\le d}\|[-i\hb\d_{x_n^\nu},[-i\hb\d_{x_m^\mu},B_N(s)]]\|\,.
\ea
\ee
Therefore
$$
M_2(s)\le M_2(t)+6\max(1,\sqrt{d}L(V))\int_s^tM_2(\tau)d\tau\,.
$$

Since
$$
B_N(t)=\frac1N\sum_{k=1}^NJ_kA
$$
one has
\be\lb{RarefBt}
\ba
{}[x_n^\nu,[x_m^\mu,B_N(t)]]&=\frac{\de_{mn}}NJ_m[x^\nu,[x^\mu,A]]
\\
[-i\hb\d_{x_n^\nu},[x_m^\mu,B_N(t)]]&=\frac{\de_{mn}}NJ_m[-i\hb\d_\nu,[x^\mu,A]]
\\
[-i\hb\d_{x_n^\nu},[-i\hb\d_{x_m^\mu},B_N(t)]]&=\frac{\de_{mn}}NJ_m[-i\hb\d_\nu,[-i\hb\d_\mu,A]]\,.
\ea
\ee
Therefore
\be\lb{M2t}
\ba
M_2(t)\le&\max_{1\le\mu,\nu\le d}\|[x^\nu,[x^\mu,A]]\|+\max_{1\le\mu,\nu\le d}\|[-i\hb\d_\nu,[x^\mu,A]]\|
\\
&+\max_{1\le\mu,\nu\le d}\|[-i\hb\d_\nu,[-i\hb\d_\mu,A]]\|\,,
\ea
\ee
and
\be\lb{M2s<}
\ba
M_2(s)\le&\max_{1\le\mu,\nu\le d}\|[x^\nu,[x^\mu,A]]\|e^{6(t-s)\max(1,\sqrt{d}L(V))}
\\
&+\max_{1\le\mu,\nu\le d}\|[-i\hb\d_\nu,[x^\mu,A]]\|e^{6(t-s)\max(1,\sqrt{d}L(V))}
\\
&+\max_{1\le\mu,\nu\le d}\|[-i\hb\d_\nu,[-i\hb\d_\mu,A]]\|e^{6(t-s)\max(1,\sqrt{d}L(V))}\,.
\ea
\ee

\subsection{Implications of the semiclassical potential estimates}


At this point, we use \eqref{Gen[]}, \eqref{[xmmuDltB]} and \eqref{Last[]} together with the bounds obtained in the last two steps. 

First, we find that
$$
\ba
\frac1{\hb}\left|\int_{[\frac{t}{\Dlt t}]\Dlt t}^t\Tr\left(R_N(s)\bE\left[\sum_{1\le m<n\le N}\left(\bT_s(m,n)-\tfrac1{N-1}\right)V_{mn},B_N(s)\right]\right)ds\right|
\\
\le\frac1{\hb}\int_{[\frac{t}{\Dlt t}]\Dlt t}^t\bE\left\|\left[\sum_{1\le m<n\le N}\left(\bT_s(m,n)-\tfrac1{N-1}\right)V_{mn},B_N(s)\right]\right\|ds
\\
\le\frac1{\hb}\int_{[\frac{t}{\Dlt t}]\Dlt t}^t\tfrac1{N-1}\sum_{1\le m<n\le N}\left\|\left[V_{mn},B_N(s)\right]\right\|ds
\\
+\frac1{\hb}\int_{[\frac{t}{\Dlt t}]\Dlt t}^t\tfrac12\sum_{m=1}^N\left\|\left[V_{m,\wtilde m(s)},B_N(s)\right]\right\|ds&\,,
\ea
$$
so that
$$
\ba
\frac1{\hb}\left|\int_{[\frac{t}{\Dlt t}]\Dlt t}^t\Tr\left(R_N(s)\bE\left[\sum_{1\le m<n\le N}\left(\bT_s(m,n)-\tfrac1{N-1}\right)V_{mn},B_N(s)\right]\right)ds\right|
\\
\le\frac{\L(V)}{\hb}\int_{[\frac{t}{\Dlt t}]\Dlt t}^t\tfrac1{N-1}\sum_{1\le m<n\le N}(\max_{1\le\ka\le d}\left\|\left[x_m^\ka,B_N(s)\right]\right\|+\max_{1\le\ka\le d}\left\|\left[x_n^\ka,B_N(s)\right]\right\|)ds
\\
+\frac{\L(V)}{\hb}\int_{[\frac{t}{\Dlt t}]\Dlt t}^t\tfrac12\sum_{m=1}^N(\max_{1\le\ka\le d}\left\|\left[x_m^\ka,B_N(s)\right]\right\|+\max_{1\le\ka\le d}\left\|\left[x_{\wtilde m(s)}^\ka,B_N(s)\right]\right\|)ds
\\
\le\frac{2\L(V)}{\hb}\int_{[\frac{t}{\Dlt t}]\Dlt t}^t\sum_{m=1}^N\max_{1\le\ka\le d}\left\|\left[x_m^\ka,B_N(s)\right]\right\|ds\le\frac{2\L(V)}{\hb}\int_{[\frac{t}{\Dlt t}]\Dlt t}^tM_1(s)ds
\\
\le 2\L(V)\Dlt te^{t\max(1,2\sqrt{d}L(V))}\frac{M_1(t)}{\hb}&\,.
\ea
$$

Next
$$
\ba
\frac1{\hb}\left|\int_{(j-1)\Dlt t}^{j\Dlt t}\Tr\Bigg(R_N(s)\Bigg[\sum_{1\le m<n\le N}\left(\bT_s(m,n)-\tfrac1{N-1}\right)V_{mn},\Dlt B_N(s,j)\Bigg]\Bigg)ds\right|
\\
\le\frac1{\hb}\int_{(j-1)\Dlt t}^{j\Dlt t}\bE\left\|\Bigg[\sum_{1\le m<n\le N}\left(\bT_s(m,n)-\tfrac1{N-1}\right)V_{mn},\Dlt B_N(s,j)\Bigg]\right\|ds
\\
\le\frac1{\hb}\int_{(j-1)\Dlt t}^{j\Dlt t}\tfrac1{N-1}\sum_{1\le m<n\le N}\|[V_{mn},\Dlt B_N(s,j)]\|ds
\\
+\frac1{\hb}\int_{(j-1)\Dlt t}^{j\Dlt t}\tfrac12\sum_{m=1}^N\|[V_{m,\wtilde m(s)},\Dlt B_N(s,j)]\|ds&\,,
\ea
$$
so that
$$
\ba
\frac1{\hb}\left|\int_{(j-1)\Dlt t}^{j\Dlt t}\Tr\Bigg(R_N(s)\Bigg[\sum_{1\le m<n\le N}\left(\bT_s(m,n)-\tfrac1{N-1}\right)V_{mn},\Dlt B_N(s,j)\Bigg]\Bigg)ds\right|
\\
\le\frac{\L(V)}{\hb}\int_{(j-1)\Dlt t}^{j\Dlt t}\tfrac1{N-1}\sum_{1\le m<n\le N}(\max_{1\le\ka\le d}\left\|\left[x_m^\ka,\Dlt B_N(s,j)\right]\right\|+\max_{1\le\ka\le d}\left\|\left[x_n^\ka,\Dlt B_N(s,j)\right]\right\|)ds
\\
+\frac{\L(V)}{\hb}\int_{(j-1)\Dlt t}^{j\Dlt t}\tfrac12\sum_{m=1}^N(\max_{1\le\ka\le d}\left\|\left[x_m^\ka,\Dlt B_N(s,j)\right]\right\|+\max_{1\le\ka\le d}\left\|\left[x_{\wtilde m(s)}^\ka,\Dlt B_N(s,j)\right]\right\|)ds
\\
\le\frac{2\L(V)}{\hb}\int_{(j-1)\Dlt t}^{j\Dlt t}\sum_{m=1}^N\max_{1\le\mu\le d}\left\|\left[x_m^\ka,\Dlt B_N(s,j)\right]\right\|&\,.
\ea
$$
Recall that
$$
\ba
\sum_{m=1}^N\max_{1\le\mu\le d}\|[x_m^\mu,\Dlt B_N(s,j)]\|
\\
\le\frac{\L(V)}{\hb}\max(1,\Dlt t)\int_s^{j\Dlt t}M_2(\tau)d\tau+2\sqrt{d}L(V)\Dlt t\int_s^{j\Dlt t}M_1(\tau)d\tau
\\
\le\L(V)\Dlt t\max(1,\Dlt t)e^{6t\max(1,\sqrt{d}L(V))}\frac{M_2(t)}\hb
\\
+2\sqrt{d}L(V)\Dlt t^2e^{t\max(1,2\sqrt{d}L(V))}M_1(t)&\,.
\ea
$$
Therefore
$$
\ba
\frac1{\hb}\left|\int_{(j-1)\Dlt t}^{j\Dlt t}\Tr\Bigg(R_N(s)\Bigg[\sum_{1\le m<n\le N}\left(\bT_s(m,n)-\tfrac1{N-1}\right)V_{mn},\Dlt B_N(s,j)\Bigg]\Bigg)ds\right|
\\
\le 2\L(V)^2\Dlt t^2\max(1,\Dlt t)e^{6t\max(1,\sqrt{d}L(V))}\frac{M_2(t)}{\hb^2}
\\
+4\sqrt{d}\L(V) L(V)\Dlt t^3e^{t\max(1,2\sqrt{d}L(V))}\frac{M_1(t)}\hb&\,.
\ea
$$

Putting together all these elements of information leads to the bound
\be\lb{LastIneq}
\ba
\left|\Tr_\fH\left((\bE\wtilde R_\indc(t)-R_\indc(t))A\right)\right|=\left|\Tr_{\fH_N}\left((\bE\wtilde R(t)-R(t))\frac1N\sum_{k=1}^NJ_kA\right)\right|
\\
\le2\L(V)(1+2\sqrt{d}L(V)t\Dlt t)\Dlt te^{t\max(1,2\sqrt{d}L(V))}\frac{M_1(t)}{\hb}
\\
+2\L(V)^2t\Dlt t\max(1,\Dlt t)e^{6t\max(1,\sqrt{d}L(V))}\frac{M_2(t)}{\hb^2}&\,.
\ea
\ee

\subsection{Specializing to the case where $A$ is a Weyl operator}\lb{SS-Step7}


In order to finish the proof of Theorem \ref{T-Main}, we restrict our attention to a convenient class of test operators $A$, for which the quantities $M_1(t)/\hb$ and $M_2(t)/\hb^2$ are bounded as $\hb\to 0$.

We first recall the definition of a Weyl operator on $\fH$. For each $a\in\cS'(\bR^d\times\bR^d)$, one defines a linear map $\Op^W_\hb[a]:\,\cS(\bR^d)\mapsto\cS'(\bR^d)$ by the following duality formula:
$$
\la\Op^W_\hb[a]\psi,\overline{\phi}\ra_{\cS'(\bR^d),\cS(\bR^d)}:=\la a,W_\hb[|\phi\ra\la\psi|]\ra_{\cS'(\bR^d),\cS(\bR^d)}\,.
$$
Elementary computations show that
$$
\tfrac{i}\hb[x^\nu,\Op^W_\hb[a]]=-\Op^W_\hb[\d_{\xi^\nu}a]\,,\quad\tfrac{i}\hb[-i\hb\d_{x^\nu},\Op^W_\hb[a]]=\Op^W_\hb[\d_{x^\nu}a]\,.
$$
Boulkhemair's improvement \cite{Boulkhem} of the Calderon-Vaillancourt theorem states that, for each integer $d\ge 1$, there exists $\g_d>0$ such that, for each $a\in\cS'(\bR^d\times\bR^d)$ satisfying 
the condition $\d_x^\a\d_\xi^\b a\in L^\infty(\bR^d\times\bR^d)$ whenever $|\a|$ and $|\b|\le[d/2]+1$, 
$$
\max_{|\a|,|\b|\le[d/2]+1}\|\d_x^\a\d_\xi^\b a\|_{L^\infty(\bR^d\times\bR^d)}\le 1\implies\|\Op^W_\hb[a]\|\le\g_d.
$$

Therefore, choosing $A=\Op^W_\hb[a]$ implies that
$$
\frac{M_1(t)}\hb\le 2\g_d\max_{|\a|,|\b|\le[d/2]+2}\|\d_x^\a\d_\xi^\b a\|_{L^\infty(\bR^d\times\bR^d)}\,,
$$
and
$$
\frac{M_2(t)}{\hb^2}\le 3\g_d\max_{|\a|,|\b|\le[d/2]+3}\|\d_x^\a\d_\xi^\b a\|_{L^\infty(\bR^d\times\bR^d)}\,,
$$
so that
$$
\ba
\left|\Tr_\fH\left((\bE\wtilde R_\indc(t)-R_\indc(t))\Op^W_\hb[a]\right)\right|
\\
\le 2\L(V)(1+2\sqrt{d}L(V)t\Dlt t)\Dlt te^{t\max(1,2\sqrt{d}L(V))}\frac{M_1(t)}{\hb}
\\
+2\L(V)^2t\Dlt t\max(1,\Dlt t)e^{6t\max(1,\sqrt{d}L(V))}\frac{M_2(t)}{\hb^2}
\\
\le 2\Dlt te^{6t\max(1,\sqrt{d}L(V))}\L(V)(2+3t\L(V)\max(1,\Dlt t)+4\sqrt{d}L(V)t\Dlt t)
\\
\times\g_d\max_{|\a|,|\b|\le[d/2]+3}\|\d_x^\a\d_\xi^\b a\|_{L^\infty(\bR^d\times\bR^d)}&\,.
\ea
$$

Since $\bE\wtilde R_\indc(t)-R_\indc(t)$ is a self-adjoint element of $\cL^1(\fH)$, there exists a complete orthonormal sequence $\phi_k$ in $\fH$ such that
$$
\bE\wtilde R_\indc(t)-R_\indc(t)=\sum_{k\ge 1}\rho_k|\phi_k\ra\phi_k|
$$
with 
$$
\rho_k\in\bR\quad\text{ and }\quad\sum_{k\ge 1}|\rho_k|<\infty\,.
$$
Therefore, if $a\in\cS(\bR^d\times\bR^d)$ satisfies
$$
\max_{|\a|,|\b|\le[d/2]+3}\|\d_x^\a\d_\xi^\b a\|_{L^\infty(\bR^d\times\bR^d)}<\infty\,,
$$
then
$$
\ba
\La W_\hb[\bE\wtilde R_\indc(t)]-W_\hb[R_\indc(t)],a\Ra_{\cS'(\bR^d),\cS(\bR^d)}=&\sum_{k\ge 1}\rho_k\la W_\hb[|\phi_k\ra\la\phi_k|],a\ra_{\cS'(\bR^d),\cS(\bR^d)}
\\
=&\Tr_\fH\left((\bE\wtilde R_\indc(t)-R_\indc(t))\Op^W_\hb[a]\right)\,.
\ea
$$
In other words
$$
\ba
|||W_\hb[\bE\wtilde R_\indc(t)]-W_\hb[R_\indc(t)]|||_{-[d/2]-3}
\\
\le 2\g_d\Dlt te^{6t\max(1,\sqrt{d}L(V))}\L(V)(2+3t\L(V)\max(1,\Dlt t)+4\sqrt{d}L(V)t\Dlt t)&\,.
\ea
$$


\section{Remarks on Theorem \ref{T-Main} and its Proof}


\subsection{Metrizing the set of density operators} 


Our analysis in the present paper can be expressed in terms of the following metric on the set of density operators on the Hilbert space $\fH$.

\begin{Def}\lb{D-Defdhb}
For $R,S\in\cD(\fH)$, set $D:=-i\d$ and 
$$
\ba
d_\hb(R,S)\!:=\!\sup\left\{|\Tr_\fH((R\!-\!S)A)|\,\,\left|\,\,\ba{}&A\in\cL(\fH)\text{ and for all }1\le\mu,\nu\le d
\\
&\hb\|[x^\mu\!,\!A]\|\!+\!\hb\|[\hb D_\mu,A]\|+\|[x^\nu\!,\![x^\mu\!,\!A]]\|
\\
&\!+\!\|[\!\hb D_\nu,[x^\mu\!,\!A]]\|\!+\!\|[\!\hb D_\nu,[\hb D_\mu,A]]\|\!\le\!5\hb^2\ea\right.\right\}\,.
\ea
$$
\end{Def}

The distance is analogous to several distances introduced earlier in the literature. The first is obviously the Monge-Kantorovich(-Rubinstein) distance, also referred to as the Wasserstein distance of exponent $1$:
see formula (7.1) in chapter 7 of \cite{Villani}. However, the Monge-Kantorovich distance is defined on the set of Borel probability measures on the Euclidean space $\bR^d$, and not on density operators on $\fH$.

An analogue of the Monge-Kantorovich distance has been proposed by Connes on the set of states on a $C^*$-algebra endowed with an unbounded Fredholm module: see Proposition 4 in \cite{ConnesFred}, or
\S 5 in the Introduction and \S 1 in chapter 6 of \cite{ConnesNC}. See also the review paper \cite{Martinetti} for a more thorough discussion of this distance. However, the Connes distance is the noncommutative 
analogue of a Riemannian metric on compact spin manifold, as explained in Proposition 1 of \cite{ConnesFred} --- see also formula (2.9) in \cite{Martinetti} which does not involve a spin structure --- and not on a 
phase space, i.e. not on a cotangent bundle. The analogue of the Monge-Kantorovich or Wasserstein distance of exponent $1$, or of Connes' distance in our setting would be
$$
MK_1^\hb(R,S):=\sup_{A\in\cL(\fH)\atop\max_{1\le\mu\le d}(\|[x^\mu,A]\|,\|[\hb D_\mu,A])\le\hb}|\Tr_\fH((R\!-\!S)A)|\,.
$$
Since the correspondence principle associates the Poisson bracket $\{\cdot,\cdot\}$ to $\frac{i}\hb[\cdot,\cdot]$, and since $\{x^\mu,\cdot\}=-\d_{\xi^\mu}$ while $\{\xi^\mu,\cdot\}=-\d_{x^\mu}$, the constraint
$\|[\hb D_\mu,A]\le\hb$ corresponds to Lipschitz continuity in the position variable $x^\mu$, while the constraint $\|[x^\mu,A]\|\le\hb$ corresponds to Lipschitz continuity in the momentum variable $\xi^\mu$.
The distance $d_N^T$ used in \cite{FGTPaulEmpir} to prove the uniformity in the Planck constant $\hb$ of the mean-field limit in quantum mechanics (see formula (43) and Theorem 1.1 in \cite{FGTPaulEmpir})
is essentially based on the same idea as $MK_1^\hb$. 

While $d_\hb$ is also based on the same idea as $MK_1^\hb$, the constraints entering its definition uses iterated commutators because of the specifics of the proof of convergence for the random batch method.
More precisely, the need for iterated commutators comes from the key step using the independence of the reshuffling permutations $\si_1,\si_2,\ldots$ leading to \eqref{WeakIdent} and the quantity $\Dlt B_N(s,j)$.
Indeed, estimating $\Dlt B_N(s,j)$ systematically involves iterated brackets, as shown in \eqref{FlaDeltaB} and the subsequent formulas.

\smallskip
The main properties of $d_\hb$ are summarized in the following proposition.

\begin{Prop}\lb{P-dhb}
For each $\hb>0$, the function $d_\hb$ is defined on $\cD(\fH)\times\cD(\fH)$ and takes its values in $[0,+\infty]$. Moreover

\noindent
(i) the function $d_\hb$ is an extended metric on $\cD(\fH)$: it is symmetric in its two arguments, satisfies the triangle inequality, and separates points in $\cD(\fH)$;

\noindent
(ii) there exists $\g_d>0$ (depending only on the space dimension $d$) such that, for all $R,S\in\cD(\fH)$
$$
|||W_\hb[R]-W_\hb[S]|||_{-[d/2]-3}\le\g_dd_\hb(R,S)\,.
$$
\end{Prop}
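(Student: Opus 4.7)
The plan is to first verify the elementary metric axioms in (i) — taking the nontrivial separation property to be a consequence of (ii) — and then prove (ii) by a direct Weyl-calculus argument combined with the Boulkhemair--Calderon--Vaillancourt theorem already used in Section~\ref{SS-Step7}. Nonnegativity of $d_\hb$ is manifest; symmetry follows because the class of admissible $A$ in Definition~\ref{D-Defdhb} is invariant under $A\mapsto -A$ and the five commutator conditions are unchanged under this transformation; the triangle inequality is immediate from $\Tr_\fH((R-T)A)=\Tr_\fH((R-S)A)+\Tr_\fH((S-T)A)$ upon taking absolute values and passing to the supremum. For separation, I would use (ii): if $d_\hb(R,S)=0$ then $|||W_\hb[R]-W_\hb[S]|||_{-[d/2]-3}=0$, so that $\iint (W_\hb[R]-W_\hb[S])\overline{a}\,dxd\xi=0$ for every $a\in C_c^\infty(\bR^{2d})$ (after rescaling to enforce the derivative constraint, which is a linear homogeneity). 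Hence $W_\hb[R]=W_\hb[S]$ as tempered distributions; since $R-S\in\cL^1(\fH)\subset\cL^2(\fH)$ its Wigner transform lies in $L^2(\bR^{2d})$, and injectivity of the Wigner transform on $\cL^2(\fH)$ yields $R=S$.

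The heart of the argument is (ii). Given $a\in C_c(\bR^d\times\bR^d)$ with $\max_{1\le |\a|+|\b|\le [d/2]+3}\|\d_x^\a\d_\xi^\b a\|_{L^\infty}\le 1$, set $A:=\Op^W_\hb[a]\in\cL(\fH)$. The commutation identities recalled in Section~\ref{SS-Step7},
\[
[x^\mu,\Op^W_\hb[a]]=i\hb\,\Op^W_\hb[\d_{\xi^\mu}a],\qquad [\hb D_\mu,\Op^W_\hb[a]]=-i\hb\,\Op^W_\hb[\d_{x^\mu}a],
\]
iterated once, transform each of the five commutators appearing in the definition of $d_\hb$ into $\hb^k$ times the Weyl quantization of a partial derivative of $a$ of order $k\in\{1,2\}$. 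Boulkhemair's theorem applied to each such symbol, whose derivatives up to order $[d/2]+1$ are themselves bounded by $1$ thanks to the standing hypothesis on $a$, produces an operator-norm bound by $\g_d\hb^k$ with $\g_d$ depending only on $d$. Consequently each of the five quantities
\[
\hb\|[x^\mu,A]\|,\;\hb\|[\hb D_\mu,A]\|,\;\|[x^\nu,[x^\mu,A]]\|,\;\|[\hb D_\nu,[x^\mu,A]]\|,\;\|[\hb D_\nu,[\hb D_\mu,A]]\|
\]
is bounded by $\g_d\hb^2$, so that the scaled operator $A/\g_d$ is admissible in the supremum defining $d_\hb(R,S)$. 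Combining with the duality $\Tr_\fH((R-S)\Op^W_\hb[a])=\iint (W_\hb[R]-W_\hb[S])\overline{a}\,dxd\xi$ (valid since $R-S\in\cL^2(\fH)$ and $a\in L^2$) yields $\bigl|\iint (W_\hb[R]-W_\hb[S])\overline{a}\,dxd\xi\bigr|\le \g_d\, d_\hb(R,S)$, and taking the supremum over admissible $a$ gives (ii).

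The main technical obstacle is the bookkeeping on derivative orders: the second-order commutators involve symbols $\d^2 a$ in which Boulkhemair requires control of another $[d/2]+1$ derivatives, making the total order of derivatives of $a$ equal to $[d/2]+3$, precisely matching the dual-norm index. One must also verify that the restriction $|\a|+|\b|>0$ in the definition of $|||\cdot|||_{-M}$ is harmless here, which is the case because every commutator necessarily produces at least one derivative of $a$, so only strictly positive-order derivatives of $a$ ever enter the bound. These compatibilities are what drive the choice of index $-[d/2]-3$ in the statement, and once they are checked, parts (i) and (ii) follow along the lines above with $\g_d$ inherited directly from Boulkhemair's constant.
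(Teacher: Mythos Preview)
Your proposal is correct and follows essentially the same route as the paper: both derive the separation property from (ii), and both prove (ii) by taking $A=\Op^W_\hb[a]$, applying the Weyl commutation identities from Section~\ref{SS-Step7}, and invoking Boulkhemair's Calderon--Vaillancourt bound to verify that $A/\g_d$ is admissible in Definition~\ref{D-Defdhb}. One small correction in your bookkeeping: the constraint defining $|||\cdot|||_{-M}$ is $|\a|\le M$ \emph{and} $|\b|\le M$ separately (with $|\a|+|\b|>0$), not $|\a|+|\b|\le M$; this stronger separate control is precisely what is needed, since Boulkhemair requires $[d/2]+1$ derivatives in each variable, and after two commutators in the same variable (e.g.\ $[x^\nu,[x^\mu,A]]$) one must bound up to $[d/2]+3$ $\xi$-derivatives of $a$ while still controlling $[d/2]+1$ $x$-derivatives.
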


\begin{proof}
That $d_\hb$ is symmetric in its arguments and satisfies the triangle inequality is obvious from the definition. That $d_\hb$ separates points in $\cD(\fH)$ follows from (ii). Indeed
$$
d_\hb(R,S)=0\implies|||W_\hb[R]-W_\hb[S]|||_{-[d/2]-3}=0\,,
$$
so that
$$
\iint_{\bR^d\times\bR^d}W_\hb[R-S](x,\xi)\overline{a(x,\xi)}dxd\xi=0
$$
for all $a\equiv a(x,\xi)\in\cS(\bR^d\times\bR^d)$ such that $\|\d_x^\a\d_\xi^\b a\|_{L^\infty(\bR^{2d})}\le 1$ for all multi-indices $\a,\b\in\bN^d$ such that $|\a|,|\b|\le[d/2]+3$, and therefore, by homogeneity and density of $\cS(\bR^d\times\bR^d)$
in $L^2(\bR^d\times\bR^d)$, for all $a\equiv a(x,\xi)\in L^2(\bR^d\times\bR^d)$. Hence $W_\hb[R]=W_\hb[S]$ in $L^2(\bR^d\times\bR^d)$, and since the Fourier transform is invertible on $L^2(\bR^d\times\bR^d)$, this implies that $R$ and $S$ 
have integral kernels a.e. equal, so that $R=S$. This proves Property (i) taking Property (ii) for granted.

The proof of Property (ii) is essentially a repetition of the last step in the proof of Theorem \ref{T-Main} (section \ref{SS-Step7}). Indeed
$$
\ba
|||W_\hb[R]-W_\hb[S]|||_{-[d/2]-3}
\\
=\sup\left\{\left|\int_{\bR^d\times\bR^d}W_\hb[R-S](x,\xi)(x,\xi)dxd\xi\right|\text{ s.t. }\max_{|\a|,|\b|\le[d/2]+3}|\d_x^\a\d_\xi^\b a(x,\xi)|=1\right\}
\\
=\sup\left\{\left|\Tr_\fH\left((R-S)\Op_\hb^W[a]\right)\right|\text{ s.t. }\max_{|\a|,|\b|\le[d/2]+3}|\d_x^\a\d_\xi^\b a(x,\xi)|=1\right\}
\\
\le\g_d d_\hb(R,S)&\,,
\ea
$$
since
$$
\ba
\max_{|\a|,|\b|\le[d/2]+3}|\d_x^\a\d_\xi^\b a(x,\xi)|=1\implies&\hb\|[x^\mu\!,\!A]\|\!+\!\hb\|[\hb D_\mu,A]\|+\|[x^\nu\!,\![x^\mu\!,\!A]]\|
\\
&\!+\!\|[\!\hb D_\nu,[x^\mu\!,\!A]]\|\!+\!\|[\!\hb D_\nu,[\hb D_\mu,A]]\|\!\le\!5\g_d\hb^2
\ea
$$
by Boulkhemair's variant \cite{Boulkhem} of the Calderon-Vaillancourt theorem.
\end{proof}

\smallskip
The error estimate in Theorem \ref{T-Main} could have been couched in terms of the distance $d_\hb$. Indeed, the inequality \eqref{LastIneq} at the end of the penultimate step in the proof of Theorem \ref{T-Main} can be recast as
\be\lb{LastIneq2}
\ba
d_\hb(\bE\wtilde R_\indc(t),R_\indc(t))\le&10\L(V)^2t\Dlt t\max(1,\Dlt t)e^{6t\max(1,\sqrt{d}L(V))}
\\
&+10\L(V)(1+2\sqrt{d}L(V)t\Dlt t)\Dlt te^{t\max(1,2\sqrt{d}L(V))}\,.
\ea
\ee
Up to unessential modifications in the constants, the error estimate in Theorem \ref{T-Main} is a consequence of this inequality and Proposition \ref{P-dhb} (ii).

\subsection{On the choice of $|||\cdot|||_{-[d/2]-3}$ or $d_\hb$ to express the error bound} 


The idea of using the metric $d_\hb$ presented in the previous section might seem strange. One might find it more natural to use more traditional metrics on density operators, such as the trace norm, for instance. Indeed, for all
$t,s\in\bR$, the map $\cU(t,s)$ is an isometry for the trace norm, because the map $U(t,s)$ is a unitary operator on $\fH_N$. 

Estimating the difference $\bE\wtilde R_\indc(t)-R_\indc(t)$ in trace norm can be done along the same line as in section \ref{S-Main}. Although this is not the simplest route to obtaining this estimate, it will be easier to compare the 
inequalities at each step in this estimate with the ones using $d_\hb$. Indeed
$$
\|\bE\wtilde R_\indc(t)-R_\indc(t)\|_1=\sup_{\|A\|\le 1}|\Tr((\bE\wtilde R_\indc(t)-R_\indc(t))A)|\,,
$$
and using \eqref{WeakIdent} shows that
$$
\ba
\|\bE\wtilde R_\indc(t)-R_\indc(t)\|_1\le\frac1\hb\int_{[\frac{t}{\Dlt t}]\Dlt t}^t\sup_{\|A\|\le 1}\left|\Tr_{\fH_N}\left(R(s)\bE\left[\sum_{1\le m<n\le N}\cV_{mn},B_N(s)\right]\right)\right|ds
\\
+\frac1\hb\sum_{j=1}^{[\frac{t}{\Dlt t}]}\int_{(j-1)\Dlt t}^{j\Dlt t}\sup_{\|A\|\le 1}\left|\Tr_{\fH_N}\Bigg(R(s)\bE\Bigg[\sum_{1\le m<n\le N}\cV_{mn},\Dlt B_N(s,j)\Bigg]\Bigg)\right|ds&\,,
\ea
$$
with the notation
$$
\cV_{mn}:=\left(\bT_s(m,n)-\tfrac1{N-1}\right)V_{mn}\,.
$$
Then
$$
\|A\|\le 1\implies\left\|\frac1N\sum_{k=1}^NJ_kA\right\|\le 1\implies\|B_N(s)\|\le 1
$$
for all $s\in[0,t]$, and
\be\lb{[cVB]<}
\ba
\left|\Tr_{\fH_N}\left(R(s)\bE\left[\sum_{1\le m<n\le N}\cV_{mn},B_N(s)\right]\right)\right|
\\
\le\sum_{1\le m<n\le N}\left|\Tr_{\fH_N}\left(R(s)[\bE\cV_{mn},B_N(s)]\right)\right|
\\
\le 2N\|V\|_{L^\infty(\bR^d)}\|R(s)\|_1\|B_N(s)\|=2N\|V\|_{L^\infty(\bR^d)}&\,,
\ea
\ee
since
$$
\ba
\bT_s(m,n)=0&\implies\|\cV_{mn}\|=\frac{1}{N-1}\|V\|_{L^\infty(\bR^d)}
\\
\bT_s(m,n)=1&\implies\|\cV_{mn}\|=\frac{N-2}{N-1}\|V\|_{L^\infty(\bR^d)}\,.
\ea
$$
Likewise
\be\lb{[cVDB]<}
\ba
\left|\Tr_{\fH_N}\Bigg(R(s)\bE\Bigg[\sum_{1\le m<n\le N}\cV_{mn},\Dlt B_N(s,j)\Bigg]\Bigg)\right|
\\
\le\sum_{1\le m<n\le N}\left|\Tr_{\fH_N}(R(s)\bE[\cV_{mn},\Dlt B_N(s,j)])\right|
\\
\le 2N\|V\|_{L^\infty(\bR^d)}\|\Dlt B_N(s,j)\|&\,,
\ea
\ee
and \eqref{FlaDeltaB} implies that
\be\lb{|DB|<}
\Dlt B_N(s,j)\|\le\frac1\hb\Dlt tN\|V\|_{L^\infty(\bR^d)}\,.
\ee
Putting all these estimates together results in the upper bound
$$
\ba
\|\bE\wtilde R_\indc(t)-R_\indc(t)\|_1\le&\frac2\hb\Dlt t\cdot N\|V\|_{L^\infty(\bR^d)}
\\
&+\frac2\hb\left[\frac{t}{\Dlt t}\right]\Dlt tN\|V\|_{L^\infty(\bR^d)}\cdot\frac1\hb\Dlt tN\|V\|_{L^\infty(\bR^d)}
\\
\le&\frac{2N}\hb\Dlt t\|V\|_{L^\infty(\bR^d)}\left(1+\frac{Nt}\hb\|V\|_{L^\infty(\bR^d)}\right)\,,
\ea
$$
which is neither uniform as $N\to\infty$ nor as $\hbar\to 0$, and therefore satisfies neither of our requirements (a) and (b) at the end of section \ref{S-Intro}.

It is instructive to compare the rather naive estimates above with the more subtle corresponding estimate in the proof of Theorem \ref{T-Main}. 

For instance, comparing \eqref{[cVB]<} with \eqref{[VB]<}, or \eqref{[cVDB]<} with \eqref{[VDB]<} shows clearly that \eqref{[VB]<} or \eqref{[VDB]<} involve \textit{only} the commutators with the variables $x_k^\mu$ that are 
present in the potential $V(x_m-x_n)$, i.e. only the two values $k=m$ or $k=n$. This key observation is at the core of section \ref{SS-SCPotEst}.

When summing over all possible pairs $m,n$ either with $m$ and $n$ in the same batch, or over all $m,n$ with the coupling constant $1/N$, one arrives at the bound \eqref{Last[]}, which does not involve the $N$ factor that is 
present in \eqref{[cVB]<}. One might suspect that this $N$ factor is hidden in the summation over $m=1,\ldots,N$ in the right hand side of \eqref{Last[]}, but in fact this summation is included in the definition \eqref{DefM1}, and 
the bounds \eqref{M1s<} and \eqref{M1t} make it clear that no $N$ factor can arise in this way. The key observation is obviously the bound \eqref{M1t} which does not involve $N$, since 
$$
B_N(t)=\frac1N\sum_{k=1}J_kA
$$
and $[x_m^\mu,J_kA]=[-i\hb\d_{x_m^\mu},J_kA]=0$ for all $\mu=1,\ldots,d$ and all $m=1,\ldots,N$ unless $m=k$. Finally, the definition of $d_\hb$ implies that the test operator $A$ satisfies both $\|[x_m^\mu,A]\|=O(\hb)$ and 
$\|[-i\hb\d_{x_m^\mu},J_kA]\|=O(\hb)$ for all $m=1,\ldots,N$ and $\mu=1,\ldots,d$, so that $M_1(t)=O(\hb)$. This nice bound (small as $\hb\to 0$, independent of $N$) is propagated by the random batch dynamics as explained 
in section \ref{SS-1stSC}. As a result, the bound \eqref{Last[]} does not involve the unpleasant $N$ factor in \eqref{[cVB]<}, and the fact that $M_1(t)=O(\hb)$ offsets the $1/\hb$ factor multiplying the last time integral on the right
hand side of \eqref{WeakIdent}, at variance with the naive estimate above.

The same advantages of using the $d_\hb$ metric instead of the trace norm are observed in the treatment of the ``generic'' term, i.e. the integral over the time interval $((j-1)\Dlt t,j\Dlt t)$ on the right hand side of \eqref{WeakIdent}. 
The naive estimate above, i.e. \eqref{[cVDB]<} and \eqref{|DB|<}, lead to an even more disastrous bound of order $N^2/\hb$ (there is one factor $N$ that comes for the same reason as in \eqref{[cVB]<}, and an additional factor 
$N/\hb$ which comes from the estimate \eqref{|DB|<} in operator norm based on Duhamel formula for $\Dlt B_N(s,j)$). Instead, one repeats with $\Dlt B_N(s,j)$ the same argument as in the treatment of \eqref{Last[]}. Since the
term $\Dlt B_N(s,j)$ is itself the time integral of a commutator involving the random batch potential, the same rarefaction in the relevant commutators $\|[x_k^\ka,B]\|$ used to control $\|[V_{mn},B_N(\tau)]\|$ is observed, except 
that one needs bounds for commutators of the form $\|[x_k^\ka,\Dlt B_N(s,j)]\|$ and not $\|\Dlt B_N(s,j)\|$ itself. This is the reason why we need to control iterated brackets of order $2$, which is done in section \ref{SS-2ndSC}. 
The bound \eqref{Gen[]} and the inequality \eqref{[xmmuDltB]} show that everything can be controlled in terms of the quantity $M_2(\tau)$ defined in \eqref{DefM2}. Here again, one might suspect that the summation over $m,n$ 
in \eqref{DefM2} would produce the same unpleasant $N^2$ factor that appears when using \eqref{[cVDB]<} and \eqref{|DB|<}, but the bounds \eqref{M2s<} and \eqref{M2t} clearly show that this is not the case. 

Eventually $M_2(\tau)=O(\hb^2)$ (uniformly in $N$) because of the choice of the test operator $A$ in the definition of $d_\hb$: here the key estimate is \eqref{RarefBt}, which explains why the sum of $N^2$ terms in \eqref{DefM2}
produces a quantity that is bounded uniformly in $N$. That $M_2(t)=O(\hb^2)$ follows from the condition on $A$ in the definition of the metric $d_\hb$, and this offsets the $1/\hb^2$ due to the integral over the time interval 
$((j-1)\Dlt t,j\Dlt t)$ in \eqref{WeakIdent}, and to the additional time integral in the Duhamel formula \eqref{FlaDeltaB} for $\Dlt B_n(j,s)$.

Summarizing the discussion above, the metric $d_\hb$ is especially designed in order to handle both the large $N$ and the small $\hb$ issues, i.e. requirements (a) and (b) in the introduction.


\section{Conclusion and Perspectives}


We have given an error estimate (Theorem \ref{T-Main}) for the simplest imaginable random batch method applied to the quantum dynamics of $N$ identical particles. This error estimate has the advantage of being independent
of the particle number $N$ and of the Planck constant $\hb$ (or more precisely of the ratio of the Planck constant to the typical action of one particle in the system). On the other hand, the final estimate is stated in terms of some
dual (negative) Sobolev type norm on the difference between the expected single body reduced density operators propagated from the same initial state by the random batch dynamics and by the $N$-particle dynamics. For the 
sake of simplicity, we have restricted our attention to the simplest case of batches of two particles only. 

The main new mathematical ingredient in the proof is the use of the metric $d_\hb$ (see Definition \ref{D-Defdhb} for the definition of this object, and Proposition \ref{P-dhb} for its basic properties), which is especially tailored 
to handle at the same time the difficulties pertaining to the small $\hb$ regime (classical limit), and those pertaining to the large $N$ regime (mean-field limit). The final statement (Theorem \ref{T-Main}) of the error estimate
does not involve the metric $d_\hb$, but is couched in terms of the Wigner transforms \cite{LionsPaul} of the $N$-body and random batch density operators, a mathematical object which is familiar in the context of quantum 
dynamics.

Several extensions of this result should be easily obtained with the mathematical tools used in the present paper. First one can obviously consider batches of $p>2$ particles; the error analysis is expected to be similar. Also, 
in practice, the random batch dynamics \eqref{RBSchro} or \eqref{RBvN} is further approximated by some convenient numerical scheme. Of course, the numerical schemes used on \eqref{RBSchro} or \eqref{RBvN} should 
satisfy the same requirements (a) and (b) (uniform convergence in $N$ and in $\hb$) listed in the introduction. For instance, time-splitting strategies for quantum dynamics converge uniformly in $\hb$ (see \cite{BaoJinMarko} 
and \cite{FGSJinTPaul}), and could be used together with random batch strategies. The numerical treatment of the space variable $x$, however, seems much more challenging.

\textbf{Acknowledgements.} The work of Shi Jin was partly supported by NSFC grants No. 11871297 and No. 31571071. We thank E. Moulines for kindly indicating several references on stochastic approximation.




\begin{thebibliography}{99}

\bibitem{BachMoulines}
F. Bach, E. Moulines:
\textit{Non-Asymptotic Analysis of Stochastic Approximation Algorithms for Machine Learning},
in ``Proc. Advances in Neural Information Processing Systems (NIPS)'', Granada, Spain, 2011, pp. 451--459.

\bibitem{BaoJinMarko}
W. Bao, S. Jin, P.A. Markowich: 
\textit{On time-splitting spectral approximations for the Schr\"odinger equation in the semiclassical regime}, 
J. Comp. Phys. \textbf{175} (2002), 487--524.

\bibitem{Benaim}
M. Benaim:
\textit{Dynamics of stochastic approximation algorithms}
in ``S\'eminaire de Probabilit\'es'', Strasbourg, tome 33, (1999), pp. 1--68,
Springer-Verlag, Berlin Heidelberg New York, 1999.

\bibitem{BeneJakPortaSaffSchlein}
N. Benedikter, V. Jaksic, M. Porta, C. Saffirio, B. Schlein:
\textit{Mean-field evolution of fermionic mixed states},
Commun. Pure Appl. Math. \textbf{69} (2016), 2250--2303.

\bibitem{BenePortaSaffSchlein} 
N. Benedikter, M. Porta, C. Saffirio, B. Schlein:
\textit{From the Hartree dynamics to the Vlasov equation}, 
Arch. Ration. Mech. Anal. \textbf{221} (2016), 273--334.

\bibitem{BenePortaSchlein} 
N. Benedikter, M. Porta, B. Schlein:
\textit{Mean-field evolution of Fermionic systems}, 
Commun. Math. Phys. \textbf{331} (2014), 1087--1131.

\bibitem{Boulkhem}
A. Boulkhemair:
\textit{$L^2$ estimates for Weyl quantization},
J. Functional Anal. \textbf{165} (1999), 173--204.

\bibitem{ConnesFred}
A. Connes:
\textit{Compact metric spaces, Fredholm modules, and hyperfiniteness},
Ergod. Th. and Dynam. Sys. \textbf{9} (1989), 207--220.

\bibitem{ConnesNC}
A. Connes:
``Noncommutative Geometry'',
Academic Press, Inc., San Diego, CA, 1994.

\bibitem{Durstenfeld}
R. Durstenfeld:
\textit{Algorithm 235: random permutation}, 
Commun. of the ACM, \textbf{7} (1964), 420.

\bibitem{FGSJinTPaul}
F. Golse, S. Jin, T. Paul:
\textit{On the Convergence of Time Splitting Methods for Quantum Dynamics in teh Semiclassical Regime},
preprint {\tt arXivarXiv:1906.03546 [math.NA]}.

\bibitem{FGMouPaul}
F. Golse, C. Mouhot, T. Paul:
\textit{On the Mean Field and Classical Limits of Quantum Mechanics},
Commun. Math. Phys. \textbf{343} (2016), 165--205.

\bibitem{FGTPaulARMA2017}
F. Golse, T. Paul:
\textit{The Schr\"odinger Equation in the Mean-Field and Semiclassical Regime},
Arch. Rational Mech. Anal. \textbf{223} (2017), 57--94.

\bibitem{FGTPaulCRAS}
F. Golse, T. Paul:
\textit{Wave Packets and the Quadratic Monge-Kantorovich Distance in Quantum Mechanics},
C. R. Acad. Sci. Paris, S\'er. I \textbf{356} (2018), 177--197.

\bibitem{FGTPaulEmpir}
F. Golse, T. Paul:
\textit{Empirical Measures and Quantum Mechanics: Applications to the Mean-Field Limit},
Commun. Math. Phys. \textbf{369} (2019), 1021--1053.

\bibitem{SJinLLiJGLiu}
S. Jin, L. Li, J.-G. Liu:
\textit{Random Batch Method for Interacting Particle Systems},
J. Comput. Phys., \textbf{400} (2020), 108877.

\bibitem{JMS}
S. Jin, P.A. Markowich, C. Sparber,
\textit{Mathematical and computational methods for semiclassical Schr{\"o}dinger equations},
 Acta Numerica, \textbf{20} (2011), 121-209.
 

\bibitem{Kushner}
H.J. Kushner, G.G. Yin:
``Stochastic Approximation and Recursive Algorithms and Applications'', 2nd edition,
Springer Verlag, New York, 2003.

\bibitem{LionsPaul}
P.-L. Lions, T. Paul: 
\textit{Sur les mesures de Wigner},
Rev. Math. Iberoam. \textbf{9} (1993), 553--618.

\bibitem{Martinetti}
P. Martinetti:
\textit{From Monge to Higgs: a survey of distance computations in noncommutative geometry}. 
In ``Noncommutative geometry and optimal transport'', 1--46, 
Contemp. Math., 676, Amer. Math. Soc., Providence, RI, 2016.

\bibitem{Nemirovski}
A. Nemirovski, A. Juditsky, G. Lan, A.Shapiro:
\textit{Robust stochastic approximation approach to stochastic programming},
SIAM J. Optimization \textbf{19} (2009), 1574--1609.

\bibitem{Villani}
C. Villani:
``Topics in Optimal Transportation'',
American Mathematical Soc, Providence (RI) (2003)

\bibitem{YingYuanVlaskiSayed}
B. Ying, K. Yuan, S. Vlaski, A. H. Sayed:
\textit{Stochastic Learning under Random Reshuffling with Constant Step-sizes}\,,
preprint {\tt arXiv 1803.07964 [cs.LG]}.

\end{thebibliography}
\end{document}